\makeatletter \@namedef{subjclassname@2010}{  \textup{2010}
Mathematics Subject Classification}
\theoremstyle{plain}
\newtheorem{theorem}{Theorem}[section]
\newtheorem{corollary}[theorem]{Corollary}
\newtheorem{proposition}[theorem]{Proposition}
\newtheorem{lemma}[theorem]{Lemma}
\theoremstyle{remark}
\newtheorem{remark}[theorem]{Remark}
\theoremstyle{definition}
\theoremstyle{example}
\numberwithin{equation}{section}
\begin{document}
\title[WEIGHT ERGODIC THEOREMS FOR PROBABILITY MEASURES]{WEIGHT ERGODIC
THEOREMS FOR PROBABILITY MEASURES ON LOCALLY COMPACT GROUPS }
\author{H. S. MUSTAFAYEV}
\address{Van Yuzuncu Yil University, Faculty of Science, Department of
Mathematics, VAN-TURKEY}
\email{hsmustafayev@yahoo.com}
\subjclass[2010]{ 28A33, 43A10, 43A30, 43A77.}
\keywords{Mean ergodic theorem, locally compact group, probability measure,
convergence.}

\begin{abstract}
Let $G$ be a locally compact group with the left Haar measure $m_{G}$. A
probability measure $\mu $ on $G$ is said to be \textit{strictly aperiodic}
if the support of $\mu $ is not contained in a proper closed left coset of $%
G.$ Assume that $\mu $ is a strictly aperiodic measure on $G$ and $\left\{
a_{n}\right\} _{n\in
%TCIMACRO{\U{2115} }%
%BeginExpansion
\mathbb{N}
%EndExpansion
}$ is a bounded good weight for the mean ergodic theorem \cite[Definition
21.1]{8}. We show that if $G$ is compact, then

\begin{equation*}
\text{w}^{\ast }-\lim_{n\rightarrow \infty }\frac{1}{n}\sum_{i=1}^{n}a_{i}%
\mu ^{i}=\left( \lim_{n\rightarrow \infty }\frac{1}{n}\sum_{i=1}^{n}a_{i}%
\right) \overline{m}_{H},
\end{equation*}%
where $H$ is the closed subgroup of $G$ generated by $\func{supp}\mu $ and $%
\overline{m}_{H}$ is the measure on $G$ defined by $\overline{m}_{H}\left(
E\right) =m_{H}\left( E\cap H\right) $ for every Borel subset $E$ of $G.$ If
$H$ is not compact, then%
\begin{equation*}
w^{\ast }-\lim_{n\rightarrow \infty }\frac{1}{n}\sum_{i=1}^{n}a_{i}\mu
^{i}=0.
\end{equation*}%
Some related problems are also discussed.
\end{abstract}

\maketitle

\section{Introduction}

Let $G$ be a locally compact group with the left Haar measure $m_{G}$ (in
the case when $G$ is compact, $m_{G}$ will denote normalized Haar measure on
$G$) and let $M\left( G\right) $ be the convolution measure algebra of $G.$
As usual, $C_{0}\left( G\right) $ will denote the space of all complex
valued continuous functions on $G$ vanishing at infinity. Since $C_{0}\left(
G\right) ^{\ast }=M\left( G\right) ,$ the space $M\left( G\right) $ carries
the weak$^{\ast }$ topology $\sigma \left( M\left( G\right) ,C_{0}\left(
G\right) \right) .$ In the following, the w$^{\ast }$-topology on $M\left(
G\right) $ always means this topology. For a subset $S$ of $G,$ by $\left[ S%
\right] $ we will denote the closed subgroup of $G$ generated by $S.$ A
probability measure $\mu $ on $G$ is said to be \textit{adapted} if $\left[
\func{supp}\mu \right] =G.$ Also, a probability measure $\mu $ on $G$ is
said to be \textit{strictly aperiodic} if the support of $\mu $ is not
contained in a proper closed left coset of $G.$ For $n\in
%TCIMACRO{\U{2115} }%
%BeginExpansion
\mathbb{N}
%EndExpansion
,$ by $\mu ^{n}$ we will denote $n$-th convolution power of $\mu \in M\left(
G\right) $, where $\mu ^{0}:=\delta _{e}$ is the Dirac measure concentrated
at the unit element $e$ of $G.$

A classical Kawada-It\^{o} theorem \cite[Theorem 7]{14} asserts that if $\mu
$ is an adapted measure on a compact metrizable group $G,$ then the sequence
of probability measures $\left\{ \frac{1}{n}\sum_{i=1}^{n}\mu ^{i}\right\}
_{n\in
%TCIMACRO{\U{2115} }%
%BeginExpansion
\mathbb{N}
%EndExpansion
}$ weak$^{\ast }$ converges to the Haar measure on $G$ (see also \cite[%
Theorem 3.2.4]{11}). If $\mu $ is an adapted and strictly aperiodic measure
on a compact metrizable group $G,$ then w$^{\ast }$-$\lim_{n\rightarrow
\infty }\mu ^{n}=m_{G}$ \cite[Theorem 8]{14}. If $\mu $ is an adapted
measure on a second countable non-compact group $G$, then w$^{\ast }$-$%
\lim_{n\rightarrow \infty }\mu ^{n}=0$ \cite[Theorem 2]{18}. In \cite[Th\'{e}%
or\`{e}me 8]{3}, it was proved that if $\mu $ is a strictly aperiodic
measure on a non-compact locally compact group $G,$ then w$^{\ast }$-$%
\lim_{n\rightarrow \infty }\mu ^{n}=0$. For related results see also, \cite%
{1}, \cite{2}, \cite{11}.

In this paper, we present some results concerning convergence of probability
measures on locally compact groups.

\section{Weak$^{\ast }$ convergence}

Let $X$ be a complex Banach space and let $B\left( X\right) $ be the algebra
of all bounded linear operators on $X$. An operator $T\in B\left( X\right) $
is said to be \textit{mean ergodic} if the limit
\begin{equation*}
\lim_{n\rightarrow \infty }\frac{1}{n}\sum_{i=1}^{n}T^{i}x\text{ \ exists in
norm }\forall x\in X.
\end{equation*}%
If $T$ is mean ergodic, then
\begin{equation*}
P_{T}x:=\lim_{n\rightarrow \infty }\frac{1}{n}\sum_{i=1}^{n}T^{i}x\text{ \ }%
\left( x\in X\right)
\end{equation*}%
is the projection onto $\ker \left( T-I\right) $. The projection $P_{T}$
will be called \textit{mean ergodic projection associated with\ }$T.$

An operator $T\in B\left( X\right) $ is said to be \textit{power bounded} if
\begin{equation*}
C_{T}:=\sup_{n\geq 0}\left\Vert T^{n}\right\Vert <\infty .
\end{equation*}%
A power bounded operator $T$ on a Banach space $X$ is mean ergodic if and
only if%
\begin{equation}
X=\overline{\text{ran}\left( T-I\right) }\oplus \ker \left( T-I\right) .
\label{2.1}
\end{equation}%
It is well known that a power bounded operator on a reflexive Banach space
is mean ergodic (for instance see, \cite[Chapter 2]{15}).

The following result is an immediate consequence of the identity (2.1).

\begin{proposition}
Let $T$ be a power bounded operator on a Banach space $X$ and assume that $%
\lim_{n\rightarrow \infty }\left\Vert T^{n+1}x-T^{n}x\right\Vert =0$ for all
$x\in X.$ If $T$ is mean ergodic $($so if $X$ is reflexive$)$, then $%
\lim_{n\rightarrow \infty }\left\Vert T^{n}x-P_{T}x\right\Vert =0$ for all $%
x\in X,$ where $P_{T}$ is the \textit{mean ergodic projection }associated
with\textit{\ }$T$.
\end{proposition}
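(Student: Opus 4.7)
The proof plan is to exploit the decomposition (2.1) and reduce the convergence of $T^{n}x$ to two trivial cases, one on each summand.

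First, I would fix $x\in X$ and, using the mean ergodicity hypothesis together with (2.1), write $x=y+z$ uniquely with $y\in\overline{\operatorname{ran}(T-I)}$ and $z\in\ker(T-I)$. Since $P_{T}$ is the projection onto $\ker(T-I)$ along $\overline{\operatorname{ran}(T-I)}$, we have $P_{T}x=z$. On the kernel summand we get $T^{n}z=z=P_{T}x$ for every $n$, so this part contributes nothing to the error.

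The heart of the argument is to show $\|T^{n}y\|\to 0$ for every $y\in\overline{\operatorname{ran}(T-I)}$. I would first treat $y$ of the form $y=(T-I)w$: in that case $T^{n}y=T^{n+1}w-T^{n}w$, which tends to $0$ by the standing hypothesis. For a general $y\in\overline{\operatorname{ran}(T-I)}$, I would fix $\varepsilon>0$, approximate $y$ within $\varepsilon/(1+C_{T})$ by some $(T-I)w$, and use the power-boundedness estimate $\|T^{n}\|\le C_{T}$ together with the triangle inequality to conclude $\limsup_{n}\|T^{n}y\|\le\varepsilon$. Letting $\varepsilon\to 0$ gives $T^{n}y\to 0$.

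Combining the two pieces yields $T^{n}x=T^{n}y+z\to z=P_{T}x$, as required. The only mildly delicate point is the density/approximation step on $\overline{\operatorname{ran}(T-I)}$, and power-boundedness is exactly what makes that step routine; without it one could not uniformly control $\|T^{n}\|$ on the approximating error. No further structure of $X$ is needed beyond (2.1), which is why the proposition is essentially immediate from the identity cited.
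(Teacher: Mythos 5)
Your argument is correct and is exactly the one the paper intends: the paper gives no explicit proof, stating only that the proposition is an immediate consequence of the decomposition (2.1), and your splitting $x=y+z$ with $T^{n}y\to 0$ (first on $\operatorname{ran}(T-I)$ via the hypothesis, then on its closure via power-boundedness) is the standard way to make that precise.
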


The open unit disc and the unit circle in the complex plane will be denoted
by $\mathbb{D}$ and $\mathbb{T}$, respectively. If $T\in B\left( X\right) $
is power bounded then clearly, $\sigma \left( T\right) \subseteq \overline{%
\mathbb{D}}$, where $\sigma \left( T\right) $ is the spectrum of $T.$ The
classical Katznelson-Tzafriri theorem \cite{13} states that if $T\in B\left(
X\right) $ is power bounded, then $\lim_{n\rightarrow \infty }\left\Vert
T^{n+1}-T^{n}\right\Vert =0$ if and only if $\sigma \left( T\right) \cap
\mathbb{T}\subseteq \left\{ 1\right\} .$

If $T\in B\left( X\right) $ is mean ergodic, then $T$ is Ces\`{a}ro bounded,
that is,
\begin{equation*}
\sup_{n\in
%TCIMACRO{\U{2115} }%
%BeginExpansion
\mathbb{N}
%EndExpansion
}\left\Vert \frac{1}{n}\sum_{i=1}^{n}T^{i}\right\Vert <\infty .
\end{equation*}%
It follows from the spectral mapping theorem that $r\left( T\right) \leq 1,$
where $r\left( T\right) $ is the spectral radius of $T.$

Let $N$ be a normal operator on a complex Hilbert space $\mathcal{H}$ with
the spectral measure $E$ $($recall that $\sigma \left( N\right) =\func{supp}%
E $). If $N$ is mean ergodic, then $\left\Vert N\right\Vert =r\left(
N\right) \leq 1$ ( notice also that a normal operator is power bounded if
and only\ it is a contraction). It easily follows from the spectral theorem
that if $N$ is a normal contraction operator on a Hilbert space $\mathcal{H}$%
, then%
\begin{equation*}
\lim_{n\rightarrow \infty }\frac{1}{n}\sum_{i=i}^{n}N^{i}x=E\left\{
1\right\} x\text{\ \ in norm }\forall x\in \mathcal{H}.
\end{equation*}%
On the other hand, a few lines of computation show that
\begin{equation*}
\lim_{n\rightarrow \infty }\left\Vert N^{n+1}x-N^{n}x\right\Vert
^{2}=\dint\limits_{\sigma \left( N\right) \cap \mathbb{T\diagdown }\left\{
1\right\} }\left\vert z-1\right\vert ^{2}d\langle E\left( z\right)
x,x\rangle \text{, \ }\forall x\in \mathcal{H}\text{.}
\end{equation*}%
From this identity and from Proposition 2.1 it follows that the sequence $%
\left\{ N^{n}x\right\} _{n\in
%TCIMACRO{\U{2115} }%
%BeginExpansion
\mathbb{N}
%EndExpansion
}$ converges in norm for every $x\in \mathcal{H}$ if and only if $\left\Vert
N\right\Vert \leq 1$ and $E\left[ \sigma \left( N\right) \cap \mathbb{%
T\diagdown }\left\{ 1\right\} \right] =0.$ Under these conditions we have
\begin{equation*}
\lim_{n\rightarrow \infty }\left\Vert N^{n}x-E\left\{ 1\right\} x\right\Vert
=0\text{, \ }\forall x\in \mathcal{H}.
\end{equation*}

Recall from \cite[Definition 21.1]{8} that a sequence $\left\{ a_{n}\right\}
_{n\in
%TCIMACRO{\U{2115} }%
%BeginExpansion
\mathbb{N}
%EndExpansion
}$ in $%
%TCIMACRO{\U{2102} }%
%BeginExpansion
\mathbb{C}
%EndExpansion
$ is called \textit{good weight} \textit{for the mean ergodic theorem}
(briefly \textit{good weight}) if for every Hilbert space $\mathcal{H}$ and
every contraction $T$ on $\mathcal{H}$ the limit
\begin{equation*}
\lim_{n\rightarrow \infty }\frac{1}{n}\sum_{i=1}^{n}a_{i}T^{i}x\text{ \
exists }\forall x\in \mathcal{H}\text{.}
\end{equation*}%
By Theorem 21.2 of \cite{8}, a bounded sequence $\left\{ a_{n}\right\}
_{n\in
%TCIMACRO{\U{2115} }%
%BeginExpansion
\mathbb{N}
%EndExpansion
}$ is a good weight if and only if the limit
\begin{equation*}
\lim_{n\rightarrow \infty }\frac{1}{n}\sum_{i=1}^{n}a_{i}\xi ^{i}=:a\left(
\xi \right) \text{ \ exists }\forall \xi \in \mathbb{T}\text{.}
\end{equation*}

Let $\left( \Omega ,\Sigma ,m\right) $ be a probability space and let $%
\varphi :\Omega \rightarrow \Omega $ be a measure-preserving transformation.
It follows from the Wiener-Wintner theorem \cite[Corollary 21.6]{8} that the
sequence $\left( f\left( \varphi ^{n}\left( \omega \right) \right) \right)
_{n\in
%TCIMACRO{\U{2115} }%
%BeginExpansion
\mathbb{N}
%EndExpansion
}$ is a bounded good weight for any $f\in L^{\infty }\left( \Omega \right) $
and for all almost every $\omega \in \Omega $.

If $\left\{ a_{n}\right\} _{n\in
%TCIMACRO{\U{2115} }%
%BeginExpansion
\mathbb{N}
%EndExpansion
}$ is a bounded good weight, then for every contraction $T$ on a Hilbert
space $\mathcal{H}$ one has
\begin{equation}
\lim_{n\rightarrow \infty }\frac{1}{n}\sum_{i=1}^{n}a_{i}T^{i}x=\sum_{\xi
\in \sigma _{p}\left( T\right) \cap \mathbb{T}}a\left( \xi \right) P_{\xi }x%
\text{\ \ in norm }\forall x\in \mathcal{H},  \label{2.2}
\end{equation}%
where $P_{\xi }$ are the orthogonal projections onto the mutually orthogonal
eigenspaces $\ker \left( T-\xi I\right) $ for $\xi \in \sigma _{p}\left(
T\right) \cap \mathbb{T}$, where $\sigma _{p}\left( T\right) $ is the point
spectrum of $T$ \cite[Theorem 21.2]{8} (it follows that $a\left( \xi \right)
\neq 0$ for at most countably many $\xi \in \mathbb{T}$).

Let $N$ be a normal contraction operator on a Hilbert space $\mathcal{H}$
with the spectral measure $E.$ Assume that $E\left\{ \xi \right\} =0$ for
all $\xi \in \mathbb{T}\diagdown \left\{ 1\right\} .$ It follows from the
identity (2.2) that if $\left\{ a_{n}\right\} _{n\in
%TCIMACRO{\U{2115} }%
%BeginExpansion
\mathbb{N}
%EndExpansion
}$ is a bounded good weight, then%
\begin{equation*}
\lim_{n\rightarrow \infty }\frac{1}{n}\sum_{i=1}^{n}a_{i}N^{i}x=\left(
\lim_{n\rightarrow \infty }\frac{1}{n}\sum_{i=1}^{n}a_{i}\right) E\left\{
1\right\} x\text{\ \ in norm }\forall x\in \mathcal{H}\text{.}
\end{equation*}

Let $G$ be a locally compact group with the left Haar measure $m_{G}$. If $H$
is a closed subgroup of $G$, then the measure $m_{H}$ may be regarded as a
measure on $G$ by putting $\overline{m}_{H}\left( E\right) =m_{H}\left(
E\cap H\right) $ for every Borel subset $E$ of $G.$ Note that $\func{supp}%
\overline{m}_{H}=H.$

One of the main results of the paper is the following.

\begin{theorem}
Let $\mu $ be a strictly aperiodic measure on a locally compact group $G$
and let $\left\{ a_{n}\right\} _{n\in
%TCIMACRO{\U{2115} }%
%BeginExpansion
\mathbb{N}
%EndExpansion
}$ be a bounded good weight. The following assertions hold:

$\left( a\right) $ If $G$ is compact, then
\begin{equation*}
w^{\ast }-\lim_{n\rightarrow \infty }\frac{1}{n}\sum_{i=1}^{n}a_{i}\mu
^{i}=\left( \lim_{n\rightarrow \infty }\frac{1}{n}\sum_{i=1}^{n}a_{i}\right)
\overline{m}_{\left[ \func{supp}\mu \right] }.
\end{equation*}

$\left( b\right) $ If $\left[ \func{supp}\mu \right] $ is not compact, then
\begin{equation*}
w^{\ast }-\lim_{n\rightarrow \infty }\frac{1}{n}\sum_{i=1}^{n}a_{i}\mu
^{i}=0.
\end{equation*}
\end{theorem}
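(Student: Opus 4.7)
\medskip

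\noindent\textbf{Proof plan.} The plan is to reduce both parts to the case $G=H:=[\func{supp}\mu ]$. Since each $\mu ^{i}$ is supported on $H$, the pairing of any $\phi \in C_{0}(G)$ against $\frac{1}{n}\sum_{i=1}^{n}a_{i}\mu ^{i}$ only involves $\phi |_{H}\in C_{0}(H)$; moreover $\mu $, viewed as a measure on $H$, is adapted (by definition of $H$) and remains strictly aperiodic, since any proper closed left coset of $H$ inside $H$ is also a proper closed left coset of $G$. Thus in both parts we may assume $G=H$.

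For part $(b)$ the group is now non-compact. Th\'eor\`eme 8 of \cite{3}, recalled in the introduction, gives w$^{\ast }$-$\lim_{n}\mu ^{n}=0$ in $M(G)$, so $b_{n}:=\int \phi \,d\mu ^{n}\rightarrow 0$ for every $\phi \in C_{0}(G)$. With $C:=\sup_{n}|a_{n}|<\infty $ we get $\bigl|\frac{1}{n}\sum_{i=1}^{n}a_{i}b_{i}\bigr|\leq \frac{C}{n}\sum_{i=1}^{n}|b_{i}|\rightarrow 0$ by a routine Ces\`aro argument, which proves $(b)$. Note this part uses only boundedness of $\{a_{n}\}$, not the good-weight property.

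For part $(a)$, with $G$ compact and $\mu$ adapted and strictly aperiodic, I would invoke the Peter--Weyl theorem: the matrix coefficients $\phi _{u,v}^{\pi }(g):=\langle \pi (g)u,v\rangle $ of irreducible unitary representations $\pi $ of $G$ are uniformly dense in $C(G)$. For each such $\pi $, acting on the finite-dimensional Hilbert space $\mathcal{H}_{\pi }$, the operator $\pi (\mu ):=\int \pi (g)\,d\mu (g)$ is a contraction satisfying $\pi (\mu ^{i})=\pi (\mu )^{i}$, so $\int \phi _{u,v}^{\pi }\,d\mu ^{i}=\langle \pi (\mu )^{i}u,v\rangle $. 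The key \emph{spectral claim} is that $\sigma _{p}(\pi (\mu ))\cap \mathbb{T}=\emptyset $ for every non-trivial $\pi $, while $\pi (\mu )=1$ for the trivial representation. Granted this, identity $(2.2)$ applied to the finite-dimensional contraction $\pi (\mu )$ yields $\frac{1}{n}\sum_{i=1}^{n}a_{i}\pi (\mu )^{i}\rightarrow 0$ for non-trivial $\pi $ and $\rightarrow a\cdot I$ for the trivial one, with $a:=\lim_{n}\frac{1}{n}\sum_{i=1}^{n}a_{i}$. Using Schur orthogonality ($\int \phi _{u,v}^{\pi }\,dm_{G}=\langle u,v\rangle $ for trivial $\pi $ and $0$ otherwise), this gives $\frac{1}{n}\sum a_{i}\int \phi _{u,v}^{\pi }\,d\mu ^{i}\rightarrow a\int \phi _{u,v}^{\pi }\,dm_{G}$ for every matrix coefficient. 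Density together with the uniform bound $\|\frac{1}{n}\sum a_{i}\mu ^{i}\|\leq C$ then extends this to all $\phi \in C(G)$, giving $\frac{1}{n}\sum a_{i}\mu ^{i}\rightarrow a\cdot m_{G}$ weak$^{\ast }$ in $M(G)$, and undoing the reduction produces $a\cdot \overline{m}_{H}$.

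The hard part is the spectral claim. Suppose $\pi (\mu )u=\xi u$ with $|\xi |=1$ and $u\neq 0$. Then $\xi u=\int \pi (g)u\,d\mu (g)$, and since $\|\pi (g)u\|=\|u\|$ by unitarity, the equality $\|\int \pi (g)u\,d\mu (g)\|=\int \|\pi (g)u\|\,d\mu (g)$ forces $\pi (g)u$ to be $\mu $-a.e.\ constant in $g$, necessarily equal to $\xi u$; by closedness this extends to every $g\in \func{supp}\mu $. The set $K:=\{g\in G:\pi (g)u\in \mathbb{C}u\}$ is then a closed subgroup of $G$ carrying a continuous character $c:K\rightarrow \mathbb{T}$ with $c\equiv \xi $ on $\func{supp}\mu $; adaptedness forces $K=G$, so $\mathbb{C}u$ is a $\pi $-invariant line, whence by irreducibility $\pi $ is itself a character and $\pi =c$. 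Strict aperiodicity finally forbids $c$ non-trivial (else $\func{supp}\mu \subseteq c^{-1}(\xi )$ would be a proper closed left coset of $\ker c$), so $c\equiv 1$, $\xi =1$, and $\pi $ is the trivial representation.
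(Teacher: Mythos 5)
Your proof is correct, and it differs from the paper's in two genuine ways. First, you reduce everything at the outset to the closed subgroup $H=[\operatorname{supp}\mu]$, on which $\mu$ becomes adapted as well as strictly aperiodic; this is legitimate (a proper closed left coset of $H$ is one of $G$, and restriction maps $C_0(G)$ into $C_0(H)$), and it buys you a cleaner part $(a)$: only the trivial representation of $H$ carries a unitary eigenvalue, so the weighted averages of $\widehat{\mu}(\pi)^i$ vanish for every nontrivial $\pi\in\widehat{H}$ and the limit measure is read off as $a\,m_H$ directly from Schur orthogonality. The paper instead works on $G$ throughout, gets $\widehat{\theta_\mu}(\pi)=aP_\mu^\pi$ for all $\pi\in\widehat{G}$, and must then identify $P_\mu^\pi$ with $\widehat{\overline{m}_H}(\pi)$ via its Lemma 2.3 applied to the idempotent $\overline{m}_H$; your reduction sidesteps that identification. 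Your inline proof of the spectral claim (the strict-convexity/Cauchy--Schwarz step forcing $\pi(g)u=\xi u$ on $\operatorname{supp}\mu$, then the stabilizer subgroup $K$ and the coset $c^{-1}(\xi)$ of $\ker c$) is exactly the content of the paper's Lemmas 2.3 and 2.4, reproved rather than cited. Second, for part $(b)$ the paper stays self-contained: it applies identity (2.2) to the left regular representation on $L^2(G)$, notes that $\lambda_2(\mu)$ has no nonzero fixed vectors when $H$ is noncompact, and uses density of $\{u\ast\widetilde{\upsilon}\}$ in $C_0(G)$; you instead import Derriennic's theorem ($\mathrm{w}^\ast$-$\lim_n\mu^n=0$ for strictly aperiodic measures on noncompact groups, applied to $H$) and finish with a Ces\`aro argument. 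Your route needs only boundedness of $\{a_n\}$, not the good-weight property, but it rests on a substantially deeper external result than the paper's elementary $L^2$ argument. Both routes are sound.
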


Below, we collect some preliminary results that will needed later on.

Let $G$ be a locally compact group. A \textit{representation} $\pi $ of $G$
on a Banach space $X_{\pi }$ (the representation space of $\pi )$ is a
homomorphism from $G$ to the group of invertible isometries on $X_{\pi }.$
We assume that $\pi $ is strongly continuous. Then, for an arbitrary $\mu
\in M\left( G\right) $ we can define a bounded linear operator $\widehat{\mu
}\left( \pi \right) $ on $X_{\pi }$ by
\begin{equation*}
\widehat{\mu }\left( \pi \right) x=\int_{G}\pi \left( g\right) xd\mu \left(
g\right) \text{, \ }x\in X_{\pi }.
\end{equation*}%
The map $\mu \rightarrow \widehat{\mu }\left( \pi \right) $ is linear,
multiplicative, and contractive; $\left\Vert \widehat{\mu }\left( \pi
\right) \right\Vert \leq \left\Vert \mu \right\Vert _{1},$ where $\left\Vert
\mu \right\Vert _{1}$ is the total variation norm of $\mu \in M\left(
G\right) .$

By $\widehat{G}$ we will denote unitary dual of $G$, the set of all
equivalence classes of irreducible continuous unitary representations of $G$
with the Fell topology. Recall that $\pi _{0}\in \widehat{G}$ is a limit
point of $M\subset \widehat{G}$ in the Fell topology, if the matrix function
$g\rightarrow \langle \pi _{0}\left( g\right) x_{0},x_{0}\rangle $ $\left(
x_{0}\in \mathcal{H}_{\pi _{0}}\right) $ can be uniformly approximated on
every compact subset of $G$ by the matrix functions $g\rightarrow \langle
\pi \left( g\right) x,x\rangle $ $\left( \pi \in M,\text{ }x\in \mathcal{H}%
_{\pi }\right) $ (in the case when $G$ is abelian, Fell topology coincides
with the usual topology of $\widehat{G},$ the dual group of $G$). The
function $\pi \rightarrow \widehat{\mu }\left( \pi \right) $ $\left( \pi \in
\widehat{G}\right) $ is called \textit{Fourier-Stieltjes transform }of $\mu
\in M\left( G\right) $ (if $G$ is abelian, the Fourier-Stieltjes transform
of $\mu \in M\left( G\right) $ will be denoted simply by $\widehat{\mu }$).
If $\widehat{\mu }\left( \pi \right) =0$ for all $\pi \in \widehat{G},$ then
$\mu =0$ (for instance see, \cite[\S 18]{5}). It is well known that if $G$
is compact, then every $\pi \in \widehat{G}$ is finite dimensional. Also, we
know that if $G$ is compact (resp. compact and metrizable) then $\widehat{G}$
is discrete (resp. countable). These facts are consequences of the
Peter-Weyl theory \cite[Chapter 4]{17}.

By $B_{X}$ and $S_{X}$ respectively, we denote the closed unit ball and the
unit sphere of a Banach space $X.$ Notice that ext$B_{X}\subseteq S_{X},$
where ext$B_{X}$ is the set of all extreme points of $B_{X}.$ For the sake
of convenience, $X$ will be called $A$-\textit{space} if ext$B_{X}=S_{X}$.
For example, uniformly convex Banach spaces, in particular, Hilbert spaces
and $L^{p}$ $\left( 1<p<\infty \right) $ spaces are $A$-spaces.

The proof of the following result is similar to the proof of \cite[%
Proposition 2.1]{4}.

\begin{lemma}
Let $\mu $ be a probability measure on a locally compact group $G$ and let $%
\pi $ be a continuous Banach representation of $G$. If the representation
space $X_{\pi }$ is a Banach $A$-space, then for an arbitrary $\xi \in
\mathbb{T}$ we have
\begin{equation*}
\ker \left[ \widehat{\mu }\left( \pi \right) -\xi I_{\pi }\right] =\left\{
x\in X_{\pi }:\pi \left( g\right) x=\xi x,\text{ }\forall g\in \func{supp}%
\mu \right\} .
\end{equation*}
\end{lemma}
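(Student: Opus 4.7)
The plan is to prove the two inclusions separately, with the nontrivial direction relying on an extreme-point / barycenter argument that uses the $A$-space hypothesis. The inclusion $\supseteq$ is immediate: if $\pi(g)x=\xi x$ for every $g\in\operatorname{supp}\mu$, then the Bochner integral defining $\widehat{\mu}(\pi)x$ reduces to $\xi x$, because $\mu$ is a probability measure supported on that set.

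For the nontrivial inclusion $\subseteq$, I would first dispose of the trivial case $x=0$ and normalize to a unit vector $y=x/\|x\|$, which still satisfies $\widehat{\mu}(\pi)y=\xi y$. Rewriting this identity as
\[
y=\int_{G}\xi^{-1}\pi(g)y\,d\mu(g)
\]
exhibits $y$ as the $\mu$-barycenter of the vectors $u(g):=\xi^{-1}\pi(g)y$, each of which has norm exactly $1$ because $\pi$ acts by isometries and $|\xi|=1$. Thus $y\in S_{X_\pi}$ appears as the barycenter of a probability measure concentrated on $S_{X_\pi}\subseteq B_{X_\pi}$.

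The main obstacle, and the only place where the $A$-space hypothesis is actually used, is the next step: by assumption $y\in S_{X_\pi}=\operatorname{ext}B_{X_\pi}$, and an extreme point of $B_{X_\pi}$ which appears as the barycenter of a probability measure on $B_{X_\pi}$ must coincide with its integrand almost everywhere. Applied here, this forces $\xi^{-1}\pi(g)y=y$ for $\mu$-almost every $g\in G$, which translates back to $\pi(g)x=\xi x$ for $\mu$-almost every $g$. This is the same extreme-point mechanism that underlies the model argument in \cite[Proposition 2.1]{4}, and it is what fails if one drops the $A$-space assumption.

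The final step is an upgrade from ``almost every'' to ``every $g\in\operatorname{supp}\mu$'': since $\pi$ is strongly continuous, the set $N=\{g\in G:\pi(g)x=\xi x\}$ is closed in $G$; and because $\mu(G\setminus N)=0$ while $\operatorname{supp}\mu$ is by definition the smallest closed set of full $\mu$-measure, we must have $\operatorname{supp}\mu\subseteq N$, which is exactly the desired conclusion.
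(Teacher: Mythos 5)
Your proof is correct, and the easy parts are handled properly: the inclusion $\supseteq$ is indeed immediate, and the final passage from ``$\mu$-almost every $g$'' to ``every $g\in\operatorname{supp}\mu$'' (closedness of $\{g\in G:\pi(g)x=\xi x\}$ by strong continuity, together with the fact that a Radon probability measure gives full mass to its support) is exactly right. Where you diverge from the paper is in how the $A$-space hypothesis is exploited. You invoke the global principle that an extreme point of $B_{X_{\pi}}$ which arises as the barycenter of a $B_{X_{\pi}}$-valued Bochner-integrable distribution must agree with the integrand almost everywhere. That principle is true, but in a general Banach space it is not a one-liner: it is immediate in Hilbert space (pair with the exposing functional $\langle\cdot,y\rangle$), and for compact convex sets it is Bauer's theorem, but $B_{X_{\pi}}$ need not be compact here; a complete argument uses that a Bochner-integrable function is essentially separably valued, covers the essential range by balls of small radius, and applies the two-point convex-combination trick to a ball carrying positive mass. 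Since this step is the entire content of the lemma, it should be proved or given a precise reference rather than asserted. The paper sidesteps the general principle by localizing: it fixes $s\in\operatorname{supp}\mu$, writes $\xi x=\mu(U_{i})x_{i}+\left[1-\mu(U_{i})\right]y_{i}$ where $x_{i},y_{i}\in B_{X_{\pi}}$ are the normalized averages of $\pi(g)x$ over a shrinking neighborhood basis $U_{i}$ of $s$ and over its complement, concludes $x_{i}=\xi x$ from extremality of $\xi x$, and lets $U_{i}\rightarrow\{s\}$ using continuity of $g\mapsto\pi(g)x$. That route needs only the elementary observation about extreme points of a convex combination of two ball elements, at the cost of repeating the localization at each support point; your route is cleaner conceptually (barycenter rigidity plus the support argument) but concentrates all the difficulty into the one step you left unjustified.
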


\begin{proof}
Assume that $\widehat{\mu }\left( \pi \right) x=\xi x$ for some $x\in X_{\pi
}$ with $\left\Vert x\right\Vert =1$ and $\xi \in \mathbb{T}.$ Let $s\in $$%
\func{supp}\mu $ and let $\left\{ U_{i}\right\} _{i\in I}$ be a directed (by
reverse inclusion) basis neighborhood system of $s.$ Then $\mu \left(
U_{i}\right) >0$ for all $i\in I.$ There is no restriction to assume that $%
\mu \left( U_{i}\right) <1$ for all $i\in I.$ If
\begin{equation*}
x_{i}:=\frac{1}{\mu \left( U_{i}\right) }\int_{U_{i}}\pi \left( g\right)
xd\mu \left( g\right) \text{ \ and \ \ }y_{i}:=\frac{1}{1-\mu \left(
U_{i}\right) }\int_{G\diagdown U_{i}}\pi \left( g\right) xd\mu \left(
g\right) ,
\end{equation*}%
then $\left\Vert x_{i}\right\Vert \leq 1$ and $\left\Vert y_{i}\right\Vert
\leq 1$. The identity
\begin{equation*}
\xi x=\mu \left( U_{i}\right) x_{i}+\left[ 1-\mu \left( U_{i}\right) \right]
y_{i}
\end{equation*}%
shows that $\xi x$ is a convex combination of $x_{i}$ and $y_{i}.$ Since $%
\left\Vert \xi x\right\Vert =1,$ we have $\xi x=x_{i}$ for all $i\in I$ and
therefore,%
\begin{equation*}
\frac{1}{\mu \left( U_{i}\right) }\int_{U_{i}}\left[ \pi \left( g\right)
x-\xi x\right] d\mu \left( g\right) =0\text{, \ }\forall i\in I.
\end{equation*}%
As $U_{i}\rightarrow \left\{ s\right\} $ we obtain $\pi \left( s\right)
x=\xi x.$
\end{proof}

The following result was proved in \cite[Lemma 2.3]{20}.

\begin{lemma}
Let $\mu $ be a strictly aperiodic measure on a locally compact group $G$
and let $\pi $ be a continuous unitary representation of $G$. Then, the
operator $\widehat{\mu }\left( \pi \right) $ cannot have unitary eigenvalues
except $\xi =1.$
\end{lemma}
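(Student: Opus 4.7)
The plan is to reduce Lemma 2.4 immediately to Lemma 2.3. A unitary representation acts on a Hilbert space, and Hilbert spaces are $A$-spaces, so Lemma 2.3 applies to $\pi$: if $\widehat{\mu}(\pi)x = \xi x$ for a unit vector $x \in X_{\pi}$ and $\xi \in \mathbb{T}$, then
\begin{equation*}
\pi(g)x = \xi x \quad \text{for every } g \in \operatorname{supp}\mu.
\end{equation*}
The goal is then to force $\xi = 1$ using strict aperiodicity.

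The key step is to convert this pointwise eigenvector condition on $\operatorname{supp}\mu$ into a coset containment. I would fix some $g_{0} \in \operatorname{supp}\mu$ (which is nonempty because $\mu$ is a probability measure) and set
\begin{equation*}
H := \{\, h \in G : \pi(h)x = x \,\}.
\end{equation*}
Since $\pi$ is strongly continuous and unitary, $H$ is a closed subgroup of $G$. For every $g \in \operatorname{supp}\mu$ we have $\pi(g)x = \xi x = \pi(g_{0})x$, hence $\pi(g_{0}^{-1}g)x = x$, i.e.\ $g_{0}^{-1}g \in H$. Therefore $\operatorname{supp}\mu \subseteq g_{0}H$, which is a closed left coset of $G$.

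Now the strict aperiodicity hypothesis kicks in: $\operatorname{supp}\mu$ is not contained in any proper closed left coset, so $g_{0}H = G$, forcing $H = G$. In particular $\pi(g_{0})x = x$; but also $\pi(g_{0})x = \xi x$, and $x \neq 0$, so $\xi = 1$.

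I do not anticipate a genuine obstacle here — the proof is essentially a two-line reduction once Lemma 2.3 is available. The only point worth double-checking is that the stabilizer $H$ is indeed a \emph{closed} subgroup (needed so that $g_{0}H$ counts as a closed left coset in the sense of strict aperiodicity), which follows from strong continuity of $\pi$ and the continuity of the norm; and that the empty-support case is excluded because $\mu$ is a probability measure.
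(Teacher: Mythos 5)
Your proof is correct: the paper itself gives no argument for this lemma (it only cites \cite[Lemma 2.3]{20}), and your route --- apply Lemma 2.3 to get $\pi(g)x=\xi x$ on $\operatorname{supp}\mu$, then observe that $\operatorname{supp}\mu\subseteq g_{0}H$ for the closed stabilizer subgroup $H$ of $x$, so strict aperiodicity forces $H=G$ and hence $\xi=1$ --- is exactly the standard proof and the one the cited reference uses in substance. All the points you flag (closedness of $H$, nonemptiness of the support) are handled correctly.
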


Now, let $\mu $ be a strictly aperiodic measure on a locally compact group $%
G $ and let $\pi $ be a continuous unitary representation of $G$. By Lemma
2.4, the operator $\widehat{\mu }\left( \pi \right) $ cannot have unitary
eigenvalues except $\xi =1.$ In view of the identity (2.2) and Lemma 2.3, we
have the following.

\begin{corollary}
Let $\mu $ be a strictly aperiodic measure on a locally compact group $G$
and let $\pi $ be a continuous unitary representation of $G$ on a Hilbert
space $\mathcal{H}_{\pi }$. If $\left\{ a_{n}\right\} _{n\in
%TCIMACRO{\U{2115} }%
%BeginExpansion
\mathbb{N}
%EndExpansion
}$ is a bounded good weight, then
\begin{equation*}
\lim_{n\rightarrow \infty }\frac{1}{n}\sum_{i=1}^{n}a_{i}\widehat{\mu }%
\left( \pi \right) ^{i}x=\left( \lim_{n\rightarrow \infty }\frac{1}{n}%
\sum_{i=1}^{n}a_{i}\right) P_{\mu }^{\pi }x\text{ \ in norm }\forall x\in
\mathcal{H}_{\pi }\text{,}
\end{equation*}%
where $P_{\mu }^{\pi }$ is the orthogonal projection onto
\begin{equation*}
\left\{ x\in \mathcal{H}_{\pi }:\pi \left( g\right) x=x:\forall g\in \left[
\func{supp}\mu \right] \right\} .
\end{equation*}%
In addition if $\mu $ is adapted and $\pi \in \widehat{G}\diagdown id,$ then
\begin{equation*}
\lim_{n\rightarrow \infty }\frac{1}{n}\sum_{i=1}^{n}a_{i}\widehat{\mu }%
\left( \pi \right) ^{i}x=0\text{ \ in norm }\forall x\in \mathcal{H}_{\pi }.
\end{equation*}
\end{corollary}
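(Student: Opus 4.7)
The plan is to combine the three ingredients already in hand: the weighted ergodic formula (2.2), the eigenvalue restriction from Lemma 2.4, and the identification of the eigenspace from Lemma 2.3.

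First I would observe that since $\mu$ is a probability measure, the operator $T:=\widehat{\mu}(\pi)$ is a contraction on the Hilbert space $\mathcal{H}_\pi$, so identity (2.2) is directly applicable. Applying Lemma 2.4 to $\mu$ and $\pi$, the unitary point spectrum $\sigma_p(T)\cap\mathbb{T}$ is contained in $\{1\}$, so (2.2) collapses to a single term:
\begin{equation*}
\lim_{n\to\infty}\frac{1}{n}\sum_{i=1}^{n}a_i T^i x = a(1)\,P_1 x,
\end{equation*}
where $P_1$ is the orthogonal projection onto $\ker(T-I)$. Since $a(1)=\lim_{n\to\infty}\frac{1}{n}\sum_{i=1}^{n}a_i\cdot 1^i=\lim_{n\to\infty}\frac{1}{n}\sum_{i=1}^{n}a_i$, the scalar coefficient is already of the required form.

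Next I would identify $\ker(T-I)$ with the prescribed fixed-vector space. Since $\mathcal{H}_\pi$ is Hilbert, it is an $A$-space, so Lemma 2.3 (with $\xi=1$) gives
\begin{equation*}
\ker(T-I)=\{x\in\mathcal{H}_\pi:\pi(g)x=x,\ \forall g\in\operatorname{supp}\mu\}.
\end{equation*}
The one thing to verify is that this coincides with the analogous set with $\operatorname{supp}\mu$ replaced by the generated closed subgroup $[\operatorname{supp}\mu]$. For this I would note that, for fixed $x$, the set $\{g\in G:\pi(g)x=x\}$ contains $e$, is closed under products and inverses (because $\pi$ is a homomorphism into the isometries), and is closed (by strong continuity of $\pi$); hence it is a closed subgroup, so once it contains $\operatorname{supp}\mu$ it automatically contains $[\operatorname{supp}\mu]$. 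Therefore $P_1=P_\mu^\pi$, which establishes the first assertion.

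For the supplementary statement, I would argue as follows. If $\mu$ is adapted then $[\operatorname{supp}\mu]=G$, so $P_\mu^\pi$ projects onto the subspace $\{x\in\mathcal{H}_\pi:\pi(g)x=x,\ \forall g\in G\}$ of $\pi$-invariant vectors. This subspace is $\pi$-invariant, so for an irreducible $\pi$ it is either $\{0\}$ or the whole of $\mathcal{H}_\pi$; the latter would force $\pi$ to be the trivial representation, which is excluded by the hypothesis $\pi\in\widehat{G}\setminus\mathrm{id}$. Hence $P_\mu^\pi=0$ and the limit vanishes. No step presents a real obstacle; the only mild subtlety is the closed-subgroup argument upgrading $\operatorname{supp}\mu$ to $[\operatorname{supp}\mu]$, but this is a one-line continuity/homomorphism remark.
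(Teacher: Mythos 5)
Your proposal is correct and follows exactly the route the paper intends: apply identity (2.2), use Lemma 2.4 to collapse the sum to the $\xi=1$ term, use Lemma 2.3 to identify $\ker(\widehat{\mu}(\pi)-I)$ with the fixed vectors of $\operatorname{supp}\mu$, and upgrade to $[\operatorname{supp}\mu]$ via the closed-subgroup observation, with irreducibility disposing of the adapted case. The paper leaves these details implicit, so your write-up is simply a careful expansion of the same argument.
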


Let $G$ be a locally compact group. For an arbitrary $f\in L^{p}\left(
G\right) $ $\left( 1<p<\infty \right) ,$ we put $f^{\vee }\left( g\right)
:=f\left( g^{-1}\right) $ and $\widetilde{f}:=\overline{f\left(
g^{-1}\right) }$. Notice that for every $u,\upsilon \in L^{2}\left( G\right)
,$ the function $u\ast \widetilde{\upsilon }$ is in $C_{0}\left( G\right) $
and
\begin{equation*}
\langle \mu ,u\ast \widetilde{\upsilon }\rangle =\langle \mu \ast \overline{%
\upsilon },\overline{u}\rangle \text{, \ }\forall \mu \in M\left( G\right) .
\end{equation*}%
It follows that $\left\{ u\ast \widetilde{\upsilon }:u,\upsilon \in
L^{2}\left( G\right) \right\} $ is linearly dense in $C_{0}\left( G\right) .$
Notice also that if $f\in L^{p}\left( G\right) $ $\left( p\neq 2\right) $
and $h\in L^{q}\left( G\right) $ ($1/p+1/q=1$), then $h\ast f^{\vee }\in
C_{0}\left( G\right) $ and
\begin{equation*}
\langle \mu ,h\ast f^{\vee }\rangle =\langle \mu \ast f,h\rangle ,\text{ \ }%
\forall \mu \in M\left( G\right) .
\end{equation*}%
It follows that $\left\{ h\ast f^{\vee }:h\in L^{q}\left( G\right) ,\text{ }%
f\in L^{p}\left( G\right) \right\} $ is linearly dense in $C_{0}\left(
G\right) .$

Let $\pi $ be the left regular representation of $G$ on $L^{p}\left(
G\right) $ $\left( 1\leq p<\infty \right) $, where%
\begin{equation*}
\pi \left( g\right) f\left( s\right) =f\left( g^{-1}s\right) :=f_{g}\left(
s\right) .
\end{equation*}%
Then, $\pi $ is continuous and $\widehat{\mu }\left( \pi \right) f=\mu \ast
f $ for every $\mu \in M\left( G\right) .$ We will denote this operator by $%
\lambda _{p}\left( \mu \right) .$ It is well known that the left convolution
operator $\lambda _{p}\left( \mu \right) f:=\mu \ast f$ is a bounded linear
operator on $L^{p}\left( G\right) ,$ that is,%
\begin{equation*}
\left\Vert \lambda _{p}\left( \mu \right) f\right\Vert _{p}\leq \left\Vert
\mu \right\Vert _{1}\left\Vert f\right\Vert _{p}\text{ \ and \ }\left\Vert
\lambda _{1}\left( \mu \right) \right\Vert _{1}=\left\Vert \mu \right\Vert
_{1}.
\end{equation*}

From the identity (2.2) and Lemmas 2.3 and 2.4, we obtain the following.

\begin{corollary}
Let $\mu $ be a strictly aperiodic measure on a locally compact group $G$.
If $\left\{ a_{n}\right\} _{n\in
%TCIMACRO{\U{2115} }%
%BeginExpansion
\mathbb{N}
%EndExpansion
}$ is a bounded good weight, then
\begin{equation*}
\lim_{n\rightarrow \infty }\frac{1}{n}\sum_{i=1}^{n}a_{i}\mu ^{i}\ast
f=\left( \lim_{n\rightarrow \infty }\frac{1}{n}\sum_{i=1}^{n}a_{i}\right)
P_{\mu }f\text{ \ in }L^{2}\text{-norm }\forall f\in L^{2}\left( G\right) ,
\end{equation*}%
where $P_{\mu }$ is the orthogonal projection onto
\begin{equation*}
\left\{ f\in L^{2}\left( G\right) :f_{g}=f:\forall g\in \left[ \func{supp}%
\mu \right] \right\} .
\end{equation*}%
If $\left[ \func{supp}\mu \right] $ is not compact, then
\begin{equation*}
\lim_{n\rightarrow \infty }\frac{1}{n}\sum_{i=1}^{n}a_{i}\mu ^{i}\ast f=0%
\text{ \ in }L^{2}\text{-norm for every }f\in L^{2}\left( G\right) .
\end{equation*}
\end{corollary}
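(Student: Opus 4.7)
The plan is to apply Corollary~2.5 to the left regular representation $\pi =\lambda _{2}$ of $G$ on the Hilbert space $L^{2}\left( G\right) $. Since $\widehat{\mu }\left( \lambda _{2}\right) =\lambda _{2}\left( \mu \right) $ is the convolution operator $f\mapsto \mu \ast f$, and Hilbert spaces are $A$-spaces, Lemma~2.3 (applied with $\xi =1$) yields
\begin{equation*}
\ker \left[ \lambda _{2}\left( \mu \right) -I\right] =\left\{ f\in L^{2}\left( G\right) :f_{g}=f,\text{ }\forall g\in \func{supp}\mu \right\} .
\end{equation*}
Because the set $\left\{ g\in G:f_{g}=f\right\} $ is a closed subgroup of $G$ for each fixed $f$, this subspace coincides with $\left\{ f\in L^{2}\left( G\right) :f_{h}=f,\text{ }\forall h\in H\right\} $, where $H:=\left[ \func{supp}\mu \right] $; this is precisely the range of the orthogonal projection $P_{\mu }$ described in the statement. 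The first assertion then follows immediately from Corollary~2.5.

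For the second assertion it suffices to show that when $H$ is non-compact the subspace just described reduces to $\left\{ 0\right\} $. Assume for contradiction that $f\in L^{2}\left( G\right) $ is nonzero and satisfies $f_{h}=f$ for all $h\in H$. Then $\left\vert f\right\vert $ is also $H$-left-invariant, and by Chebyshev's inequality there exists $\varepsilon >0$ for which the sublevel set $A_{\varepsilon }:=\left\{ g\in G:\left\vert f\left( g\right) \right\vert \geq \varepsilon \right\} $ is an $H$-left-invariant Borel set with $0<m_{G}\left( A_{\varepsilon }\right) <\infty $. Applying Weil's integration formula to the closed subgroup $H$ and the quotient $H\backslash G$ shows that the $m_{G}$-measure of any $H$-left-invariant Borel set with nontrivial image in $H\backslash G$ picks up the factor $m_{H}\left( H\right) $, which is infinite because a non-compact locally compact group has infinite left Haar measure. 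This contradicts $m_{G}\left( A_{\varepsilon }\right) <\infty $, so $P_{\mu }=0$ and the second assertion follows.

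The anticipated main obstacle is the final step, namely the elimination of nonzero $H$-left-invariant vectors in $L^{2}\left( G\right) $ when $H$ is non-compact. An equivalent route is the unitary equivalence obtained by disintegrating along left $H$-cosets, under which the restriction of $\lambda _{2}$ to $H$ decomposes as the tensor product of the left regular representation of $H$ on $L^{2}\left( H\right) $ with the identity on $L^{2}\left( H\backslash G\right) $; the $H$-fixed vectors of this restriction then correspond to fixed vectors of the left regular representation of $H$, i.e., to constant functions in $L^{2}\left( H\right) $, which form a nontrivial space only when $H$ is compact. Once this dichotomy is in hand, both parts of the corollary follow routinely from Corollary~2.5 and Lemma~2.3.
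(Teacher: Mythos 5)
Your proposal is correct and follows essentially the same route as the paper, which obtains this corollary directly from the weighted ergodic identity (2.2) together with Lemmas 2.3 and 2.4 applied to the left regular representation $\lambda_{2}$ (i.e., the specialization of Corollary 2.5 to $\pi=\lambda_{2}$). The only addition is that you supply, via Weil's integration formula, a proof of the fact the paper leaves implicit --- that a non-compact closed subgroup $H=\left[ \func{supp}\mu \right]$ admits no nonzero left-$H$-invariant function in $L^{2}\left( G\right)$, whence $P_{\mu }=0$ --- and that argument is sound.
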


It follows from Lemma 2.3 that if $\mu \in M\left( G\right) $ is a
probability measure and $1<p<\infty ,$ then for every $\xi \in \mathbb{T}$,%
\begin{equation*}
\ker \left[ \lambda _{p}\left( \mu \right) -\xi I\right] =\left\{ f\in
L^{p}\left( G\right) :f_{g}=\xi f,\text{ }\forall g\in \func{supp}\mu
\right\} .
\end{equation*}%
From this identity we can deduce that if the support of the probability
measure $\mu \in M\left( G\right) $ is not compact, then the operator $%
\lambda _{p}\left( \mu \right) $ $\left( 1<p<\infty \right) $ has no unitary
eigenvalues. On the other hand, we know \cite[Chapter II, Theorem 4.1]{7}
that if $T$ is a power bounded operator on a reflexive Banach space $X,$
then $T$ has no unitary eigenvalues if and only if
\begin{equation*}
\lim_{n\rightarrow \infty }\frac{1}{n}\dsum\limits_{i=1}^{n}\left\vert
\langle T^{n}x,\varphi \rangle \right\vert =0\text{ \ for all }x\in X\text{
and }\varphi \in X^{\ast }.
\end{equation*}%
\ \ Hence we have the following.

\begin{corollary}
If $\mu \in M\left( G\right) $ is a probability measure with non-compact
support, then for every $f\in L^{p}\left( G\right) $ $\left( 1<p<\infty
\right) $ and $h\in L^{q}\left( G\right) $ $\left( 1/p+1/q=1\right) ,$ we
have%
\begin{equation*}
\lim_{n\rightarrow \infty }\frac{1}{n}\dsum\limits_{i=1}^{n}\left\vert
\langle \mu ^{n}\ast f,h\rangle \right\vert =0.
\end{equation*}
\end{corollary}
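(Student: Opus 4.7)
The proof plan is to reduce the assertion to the spectral criterion for $\lambda_p(\mu)$ quoted just above the corollary. Note that $\lambda_p(\mu)$ is a contraction on the reflexive Banach space $L^p(G)$ for $1<p<\infty$, and $\langle \mu^i \ast f, h\rangle = \langle \lambda_p(\mu)^i f, h\rangle$ by the definition of $\lambda_p$ (reading the $\mu^n$ inside the Ces\`aro sum as $\mu^i$). By \cite[Ch.~II, Thm.~4.1]{7} the desired convergence is therefore equivalent to $\lambda_p(\mu)$ having no unitary eigenvalues, so the entire task reduces to showing that $\ker[\lambda_p(\mu)-\xi I]=\{0\}$ for every $\xi\in\mathbb{T}$.

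For this I would use the identification
\[
\ker[\lambda_p(\mu)-\xi I]=\{f\in L^p(G):f_g=\xi f,\ \forall g\in\operatorname{supp}\mu\},
\]
which is recorded just before the corollary as a consequence of Lemma 2.3 (applicable because $L^p(G)$ is uniformly convex, hence an $A$-space). Any $f$ in this kernel satisfies $|f(g^{-1}s)|=|f(s)|$ for almost every $s$ and every $g\in\operatorname{supp}\mu$. Iteration shows that $|f|$ is invariant under left translation by every word in $\operatorname{supp}\mu$ and its inverses, i.e.\ under the algebraic subgroup generated by $\operatorname{supp}\mu$; strong continuity of translation on $L^p(G)$ then extends this invariance to all of $H:=[\operatorname{supp}\mu]$. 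Since $\operatorname{supp}\mu$ is non-compact and $H$ is closed and contains it, $H$ is non-compact as well.

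The main step---and the only non-routine one---is the following: a non-zero $f\in L^p(G)$ cannot be left-invariant under a non-compact closed subgroup $H$. I would prove this by a packing argument. Choose a compact set $K\subset G$ with $\int_K |f|^p\,dm_G>0$. Since $H$ is non-compact while $KK^{-1}$ is compact, one can inductively select $h_1,h_2,\ldots\in H$ such that $h_j\notin \bigcup_{i<j} h_i(KK^{-1})$, which forces $h_i^{-1}h_j\notin KK^{-1}$ and hence the translates $h_nK$ to be pairwise disjoint. Using left-invariance of $|f|$ and of $m_G$,
\[
\int_G |f|^p\,dm_G \;\ge\; \sum_{n=1}^{N}\int_{h_nK}|f|^p\,dm_G \;=\; N\int_K |f|^p\,dm_G
\]
for every $N$, contradicting $f\in L^p(G)$. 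Thus the kernel is trivial for every $\xi\in\mathbb{T}$, the criterion from \cite{7} applies to $T=\lambda_p(\mu)$ with vector $f$ and functional $h$, and the stated limit follows. All remaining ingredients have already been set up explicitly in the paragraph preceding the corollary.
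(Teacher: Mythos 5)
Your proposal is correct and follows the paper's own route exactly: identify $\ker[\lambda_p(\mu)-\xi I]$ via Lemma~2.3, conclude that $\lambda_p(\mu)$ has no unitary eigenvalues when $\operatorname{supp}\mu$ is non-compact, and invoke the reflexive-space characterization of almost weak stability from \cite[Ch.~II, Thm.~4.1]{7}. The only difference is that you supply the packing argument behind the step the paper dismisses with ``from this identity we can deduce,'' and that detail is carried out correctly.
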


We now in a position to prove Theorem 2.2.

\begin{proof}[Proof of theorem 2.2]
Let $H:=\left[ \func{supp}\mu \right] $. If
\begin{equation*}
m_{n}:=\frac{1}{n}\sum_{i=1}^{n}a_{i}\mu ^{i},
\end{equation*}%
then
\begin{equation*}
\sup_{n\in
%TCIMACRO{\U{2115} }%
%BeginExpansion
\mathbb{N}
%EndExpansion
}\left\Vert m_{n}\right\Vert _{1}\leq \sup_{n\in
%TCIMACRO{\U{2115} }%
%BeginExpansion
\mathbb{N}
%EndExpansion
}\left\{ \left\vert a_{n}\right\vert \right\} <\infty .
\end{equation*}%
(a) Let $\pi \in \widehat{G}$ and let $\left\{ e_{\pi }^{\left( 1\right)
},...,e_{\pi }^{\left( n_{\pi }\right) }\right\} $ be the basic vectors of
the representation space $\mathcal{H}_{\pi }$ ($\dim \mathcal{H}_{\pi
}=n_{\pi }$). Since
\begin{equation*}
\widehat{m_{n}}\left( \pi \right) =\frac{1}{n}\sum_{i=1}^{n}a_{i}\widehat{%
\mu }\left( \pi \right) ^{i},
\end{equation*}%
by Corollary 2.6,
\begin{equation*}
\lim_{n\rightarrow \infty }\widehat{m_{n}}\left( \pi \right) =aP_{\mu }^{\pi
}\text{ \ in the strong operator topology,}
\end{equation*}%
\ where%
\begin{equation*}
a:=\lim_{n\rightarrow \infty }\frac{1}{n}\sum_{i=1}^{n}a_{i}
\end{equation*}%
and $P_{\mu }^{\pi }$ is the orthogonal projection onto $\ker \left[
\widehat{\mu }\left( \pi \right) -I\right] .$ Let $f_{i,j}^{\pi }$ be the
matrix functions of $\pi $, where
\begin{equation*}
f_{i,j}^{\pi }\left( g\right) =\langle \pi \left( g\right) e_{\pi }^{\left(
i\right) },e_{\pi }^{\left( j\right) }\rangle \text{ \ }\left(
i,j=1,...,n_{\pi }\right) .
\end{equation*}%
Then we can write
\begin{equation*}
\lim_{n\rightarrow \infty }\langle m_{n},f_{i,j}^{\pi }\rangle
=\lim_{n\rightarrow \infty }\langle \widehat{m_{n}}\left( \pi \right) e_{\pi
}^{\left( i\right) },e_{\pi }^{\left( j\right) }\rangle =\langle aP_{\mu
}^{\pi }e_{\pi }^{\left( i\right) },e_{\pi }^{\left( j\right) }\rangle .
\end{equation*}%
By the Peter-Weyl $C$-Theorem \cite[Chapter 4]{17}, the system of matrix
functions
\begin{equation*}
\left\{ f_{i,j}^{\pi }:\pi \in \widehat{G},\text{ }i,j=1,...,n_{\pi }\right\}
\end{equation*}%
is linearly dense in $C\left( G\right) ,$ the space of all complex valued
continuous functions on $G.$ Since the sequence $\left\{ m_{n}\right\}
_{n\in
%TCIMACRO{\U{2115} }%
%BeginExpansion
\mathbb{N}
%EndExpansion
}$ is norm bounded, the limit $\lim_{n\rightarrow \infty }\langle
m_{n},f\rangle $ exists for all $f\in C\left( G\right) .$ If $a=0,$ then
there is nothing to prove. Because in this case, $\lim_{n\rightarrow \infty
}\langle m_{n},f\rangle =0$ for all $f\in C\left( G\right) $ and therefore, w%
$^{\ast }$-$\lim m_{n}=0.$ Hence, we may assume that $a\neq 0.$

Further, since%
\begin{equation*}
f\mapsto \lim_{n\rightarrow \infty }\langle m_{n},f\rangle
\end{equation*}%
is a bounded linear functional on $C\left( G\right) $, there exists $\theta
_{\mu }\in M\left( G\right) $ such that%
\begin{equation*}
\lim_{n\rightarrow \infty }\langle m_{n},f\rangle =\langle \theta _{\mu
},f\rangle \text{, \ }\forall f\in C\left( G\right) .
\end{equation*}%
So we have
\begin{equation*}
\text{w}^{\ast }-\lim_{n\rightarrow \infty }m_{n}=\theta _{\mu }.
\end{equation*}%
It follows that
\begin{equation*}
\widehat{m_{n}}\left( \pi \right) \rightarrow \widehat{\theta _{\mu }}\left(
\pi \right) \text{ \ in the weak operator topology for all }\pi \in \widehat{%
G}.
\end{equation*}%
Consequently we have $\widehat{\theta _{\mu }}\left( \pi \right) =aP_{\mu
}^{\pi }$ for all $\pi \in \widehat{G}$. Now, we must show that $\theta
_{\mu }=a\overline{m}_{H}.$ To see this, let $\pi \in \widehat{G}$ be given.
Since $\overline{m}_{H}$ is an idempotent measure, $\widehat{\overline{m}_{H}%
}\left( \pi \right) $ is an orthogonal projection. Since $\func{supp}%
\overline{m}_{H}=H,$ by Lemma 2.3,%
\begin{eqnarray*}
\widehat{\overline{m}_{H}}\left( \pi \right) \mathcal{H}_{\pi } &=&\ker %
\left[ \widehat{\overline{m}_{H}}\left( \pi \right) -I_{\pi }\right] \\
&=&\left\{ x\in \mathcal{H}_{\pi }:\pi \left( g\right) x=x,\text{ }\forall
g\in H\right\} .
\end{eqnarray*}%
For the same reason,%
\begin{eqnarray*}
\frac{1}{a}\widehat{\theta _{\mu }}\left( \pi \right) \mathcal{H}_{\pi }
&=&P_{\mu }^{\pi }\mathcal{H}_{\pi }=\ker \left[ \widehat{\mu }\left( \pi
\right) -I_{\pi }\right] \\
&=&\left\{ x\in \mathcal{H}_{\pi }:\pi \left( g\right) x=x,\text{ }\forall
g\in H\right\} .
\end{eqnarray*}%
So we have $\widehat{\theta _{\mu }}\left( \pi \right) =a\widehat{\overline{m%
}_{H}}\left( \pi \right) $ for all $\pi \in \widehat{G}.$ It follows that $%
\theta _{{}}=a\overline{m}_{H}.$

(b) Assume that $H$ is not compact. As we have noted above, if $u,\upsilon
\in L^{2}\left( G\right) ,$ then $u\ast \widetilde{\upsilon }\in C_{0}\left(
G\right) $ and the set $\left\{ u\ast \widetilde{\upsilon }:u,\upsilon \in
L^{2}\left( G\right) \right\} $ is linearly dense in $C_{0}\left( G\right) $%
. Since the sequence $\left\{ m_{n}\right\} _{n\in
%TCIMACRO{\U{2115} }%
%BeginExpansion
\mathbb{N}
%EndExpansion
}$ is uniformly bounded, it suffices to show that%
\begin{equation*}
\lim_{n\rightarrow \infty }\langle m_{n},u\ast \widetilde{\upsilon }\rangle
=0.
\end{equation*}%
By Corollary 2.6,

\begin{equation*}
\lim_{n\rightarrow \infty }\left( m_{n}\ast \overline{\upsilon }\right)
=\lim_{n\rightarrow \infty }\frac{1}{n}\sum_{i=1}^{n}a_{i}\mu ^{i}\ast
\overline{\upsilon }=0\ \ \text{in }L^{2}\text{-norm.}
\end{equation*}%
So we have
\begin{equation*}
\langle m_{n},u\ast \widetilde{\upsilon }\rangle =\lim_{n\rightarrow \infty
}\langle m_{n}\ast \overline{\upsilon },\overline{u}\rangle =0.
\end{equation*}
\end{proof}

Next, we have the following.

\begin{proposition}
Let $G$ be a compact group and let $\left\{ \mu _{n}\right\} _{n\in
%TCIMACRO{\U{2115} }%
%BeginExpansion
\mathbb{N}
%EndExpansion
}$ be a bounded sequence in $M\left( G\right) $. The following conditions
are equivalent:

$\left( a\right) $ $w^{\ast }$-$\lim_{n\rightarrow \infty }\mu _{n}=\mu $
for some $\mu \in M\left( G\right) .$

$\left( b\right) $ $\lim_{n\rightarrow \infty }\left( \mu _{n}\ast f\right)
=\mu \ast f$ uniformly on $G$ for every $f\in C\left( G\right) .$
\end{proposition}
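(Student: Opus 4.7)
The plan is to handle the two implications separately: (a)$\Rightarrow$(b) rests on an equicontinuity argument plus Arzelà--Ascoli, while (b)$\Rightarrow$(a) comes for free by evaluating the convolutions at the identity $e\in G$. Throughout I will use the fact that since $G$ is compact, $C(G)=C_0(G)$, so weak$^{\ast}$ convergence in $M(G)$ is tested against all continuous functions.

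For (a)$\Rightarrow$(b), I would fix $f\in C(G)$ and note first that, by the recipe $(\nu\ast f)(g)=\int f(s^{-1}g)\,d\nu(s)$, for each fixed $g\in G$ the function $s\mapsto f(s^{-1}g)$ lies in $C(G)$. Consequently $(\mu_n\ast f)(g)\to(\mu\ast f)(g)$ pointwise from the assumed weak$^{\ast}$ convergence. To upgrade this to uniform convergence I would establish equicontinuity of the family $\{\mu_n\ast f\}_{n\in\mathbb{N}}$: compactness of $G$ gives left uniform continuity of $f$, so for every $\varepsilon>0$ there is a neighborhood $V$ of $e$ with $|f(h_1)-f(h_2)|<\varepsilon$ whenever $h_1^{-1}h_2\in V$. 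Since $(s^{-1}g')^{-1}(s^{-1}g'')=g'^{-1}g''$, for $g'^{-1}g''\in V$ we get
\begin{equation*}
\bigl|(\mu_n\ast f)(g')-(\mu_n\ast f)(g'')\bigr|\le\int\bigl|f(s^{-1}g')-f(s^{-1}g'')\bigr|\,d|\mu_n|(s)\le\varepsilon\,\|\mu_n\|_1,
\end{equation*}
and the hypothesis $\sup_n\|\mu_n\|_1<\infty$ yields equicontinuity. Arzelà--Ascoli then makes $\{\mu_n\ast f\}$ relatively compact in $C(G)$; combined with the established pointwise convergence to $\mu\ast f$, every subsequence has a further subsequence converging uniformly to $\mu\ast f$, so $\mu_n\ast f\to\mu\ast f$ in the supremum norm.

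For (b)$\Rightarrow$(a), uniform convergence on $G$ implies in particular pointwise convergence at $e$, hence
\begin{equation*}
\int_G f(s^{-1})\,d\mu_n(s)=(\mu_n\ast f)(e)\longrightarrow(\mu\ast f)(e)=\int_G f(s^{-1})\,d\mu(s)
\end{equation*}
for every $f\in C(G)$. Because $f\mapsto f^{\vee}$ is a bijection of $C(G)$ onto itself and $C(G)=C_0(G)$ on the compact group $G$, this says $\langle\mu_n,h\rangle\to\langle\mu,h\rangle$ for every $h\in C_0(G)$, which is precisely $w^{\ast}$-$\lim\mu_n=\mu$.

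The only non-routine ingredient is the equicontinuity step in (a)$\Rightarrow$(b); everything else is bookkeeping. In particular, bounded norms of $\{\mu_n\}$ together with uniform continuity of $f$ on compact $G$ are the two hypotheses that make the Arzelà--Ascoli closure argument go through, and it is precisely these two features that prevent the argument from working verbatim on non-compact groups.
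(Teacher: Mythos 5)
Your proof is correct, but the implication (a)$\Rightarrow$(b) is argued by a genuinely different route than the paper's. The paper goes through Peter--Weyl: weak$^{*}$ convergence gives $\widehat{\mu_{n}}\left( \pi \right) \rightarrow \widehat{\mu }\left( \pi \right) $ in the (finite-dimensional, hence strong) operator topology for each $\pi \in \widehat{G}$; the identity $\left( \theta \ast f_{x,y}^{\pi }\right) \left( g\right) =\langle \pi \left( g\right) x,\widehat{\theta }\left( \pi \right) y\rangle $ turns this into uniform convergence of $\mu _{n}\ast f$ when $f$ is a matrix coefficient; and density of the matrix coefficients in $C\left( G\right) $ together with $\sup_{n}\left\Vert \mu _{n}\right\Vert _{1}<\infty $ handles general $f$. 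You instead obtain pointwise convergence of $\mu _{n}\ast f$ directly from weak$^{*}$ convergence (since $s\mapsto f\left( s^{-1}g\right) $ lies in $C\left( G\right) =C_{0}\left( G\right) $) and upgrade it to uniform convergence via equicontinuity --- coming from uniform continuity of $f$ on the compact group and the bound on $\left\Vert \mu _{n}\right\Vert _{1}$ --- plus Arzel\`{a}--Ascoli. This is sound and more elementary, avoiding representation theory entirely; the only point left implicit is the uniform bound $\left\Vert \mu _{n}\ast f\right\Vert _{\infty }\leq \left( \sup_{n}\left\Vert \mu _{n}\right\Vert _{1}\right) \left\Vert f\right\Vert _{\infty }$ also needed for Arzel\`{a}--Ascoli, which is immediate (and one could bypass Arzel\`{a}--Ascoli altogether with a finite-subcover $\varepsilon /3$ argument from pointwise convergence and equicontinuity). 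What the paper's approach buys is coherence with the Fourier-analytic machinery used throughout Section 2, where uniform convergence is explicitly tied to operator-topology convergence of the Fourier--Stieltjes transforms. Your (b)$\Rightarrow $(a) is the same evaluation at $e$ as in the paper, and is in fact slightly more careful, since the paper's identity $\int_{G}fd\mu _{n}=\left( \mu _{n}\ast f\right) \left( e\right) $ really requires passing to $f^{\vee }$, which you make explicit.
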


\begin{proof}
(a)$\Rightarrow $(b) Let $\pi \in \widehat{G}$ and let $\mathcal{H}_{\pi }$
be the representation space of $\pi .$ If%
\begin{equation*}
f_{x,y}^{\pi }\left( g\right) :=\langle \pi \left( g\right) x,y\rangle \text{
\ }(x,y\in \mathcal{H}_{\pi }),
\end{equation*}%
then as
\begin{equation*}
\langle \theta ,f_{x,y}^{\pi }\rangle =\langle \widehat{\theta }\left( \pi
\right) x,y\rangle \text{, \ }\forall \theta \in M\left( G\right) ,
\end{equation*}%
we have
\begin{equation*}
\langle \widehat{\mu _{n}}\left( \pi \right) x,y\rangle =\langle \mu
_{n},f_{x,y}^{\pi }\rangle \rightarrow \langle \mu ,f_{x,y}^{\pi }\rangle
=\langle \widehat{\mu }\left( \pi \right) x,y\rangle \text{.}
\end{equation*}%
Consequently, $\widehat{\mu _{n}}\left( \pi \right) \rightarrow \widehat{\mu
}\left( \pi \right) $\ in the strong operator topology. A few lines of
computation show that
\begin{equation}
\left( \theta \ast f_{x,y}^{\pi }\right) \left( g\right) =\langle \pi \left(
g\right) x,\widehat{\theta }\left( \pi \right) y\rangle \text{, \ }\forall
\theta \in M\left( G\right) .  \label{2.3}
\end{equation}%
Now let $f\in C\left( G\right) $ be given. Since the system of matrix
functions is linearly dense in $C\left( G\right) ,$ for an arbitrary $%
\varepsilon >0$ there exist complex numbers $\lambda _{1},...,\lambda _{k}$
and $\pi _{1},...,\pi _{k}\in \widehat{G}$ such that
\begin{equation*}
\left\vert f\left( g\right) -\lambda _{1}\langle \pi _{1}\left( g\right)
x_{1},y_{1}\rangle -...-\lambda _{k}\langle \pi _{k}\left( g\right)
x_{k},y_{k}\rangle \right\vert <\varepsilon \text{ \ }\left( \forall g\in
G\right) ,
\end{equation*}%
where $x_{i},y_{i}\in \mathcal{H}_{\pi _{i}}$ $(i=1,...,k).$ By (2.3),
\begin{equation*}
\left\vert \left( \mu _{n}\ast f\right) \left( g\right) -\lambda _{1}\langle
\pi _{1}\left( g\right) x_{1},\widehat{\mu _{n}}\left( \pi _{1}\right)
y_{1}\rangle -...-\lambda _{k}\langle \pi _{k}\left( g\right) x_{k},\widehat{%
\mu _{n}}\left( \pi _{k}\right) y_{k}\rangle \right\vert <\varepsilon C
\end{equation*}%
and
\begin{equation*}
\left\vert \left( \mu \ast f\right) \left( g\right) -\lambda _{1}\langle \pi
_{1}\left( g\right) x_{1},\widehat{\mu }\left( \pi _{1}\right) y_{1}\rangle
-...-\lambda _{k}\langle \pi _{k}\left( g\right) x_{k},\widehat{\mu }\left(
\pi _{k}\right) y_{k}\rangle \right\vert <\varepsilon C,
\end{equation*}%
where $C:=\sup_{n\in
%TCIMACRO{\U{2115} }%
%BeginExpansion
\mathbb{N}
%EndExpansion
}\left\Vert \mu _{n}\right\Vert .$ It follows that
\begin{eqnarray*}
\sup_{g\in G}\left\vert \left( \mu _{n}\ast f\right) \left( g\right) -\left(
\mu \ast f\right) \left( g\right) \right\vert &\leq &\left\vert \lambda
_{1}\right\vert \left\Vert \widehat{\mu _{n}}\left( \pi _{1}\right) y_{1}-%
\widehat{\mu }\left( \pi _{1}\right) y_{1}\right\Vert \left\Vert
x_{1}\right\Vert +... \\
&&+\left\vert \lambda _{k}\right\vert \left\Vert \widehat{\mu _{n}}\left(
\pi _{k}\right) y_{k}-\widehat{\mu }\left( \pi _{k}\right) y_{k}\right\Vert
\left\Vert x_{k}\right\Vert +2\varepsilon C\text{.}
\end{eqnarray*}%
Since $\widehat{\mu _{n}}\left( \pi \right) x\rightarrow \widehat{\mu }%
\left( \pi \right) x$ in norm for all $\pi \in \widehat{G}$ and $x\in
\mathcal{H}_{\pi },$ from the preceding inequality we can deduce that $\mu
_{n}\ast f\rightarrow \mu \ast f$ uniformly on $G.$

(b)$\Rightarrow $(a) For an arbitrary $f\in C\left( G\right) ,$%
\begin{equation*}
\int_{G}fd\mu _{n}-\int_{G}fd\mu =\left( \mu _{n}\ast f\right) \left(
e\right) -\left( \mu \ast f\right) \left( e\right) \rightarrow 0.
\end{equation*}
\end{proof}

From Theorem 2.2 and Proposition 2.8 we have the following.

\begin{corollary}
Let $\mu \in M\left( G\right) $ be an adapted and strictly aperiodic measure
on a compact group $G.$ If $\left\{ a_{n}\right\} _{n\in
%TCIMACRO{\U{2115} }%
%BeginExpansion
\mathbb{N}
%EndExpansion
}$ is a bounded good weight, then for every $f\in C\left( G\right) ,$

\begin{equation*}
\lim_{n\rightarrow \infty }\frac{1}{n}\sum_{i=1}^{n}a_{i}\mu ^{i}\ast
f=\left( \lim_{n\rightarrow \infty }\frac{1}{n}\sum_{i=1}^{n}a_{i}\right)
\int_{G}fdm_{G}\text{ \ uniformly on }G.
\end{equation*}
\end{corollary}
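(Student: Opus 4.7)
The plan is to combine Theorem 2.2(a) with Proposition 2.8 and then identify the limit function explicitly as a constant.

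First I would set $\mu_n := \frac{1}{n}\sum_{i=1}^{n} a_i \mu^i$ and observe that $\sup_n \|\mu_n\|_1 \leq \sup_n |a_n| < \infty$, so the sequence is bounded in $M(G)$. Since $\mu$ is adapted, $[\operatorname{supp}\mu] = G$, hence $\overline{m}_{[\operatorname{supp}\mu]} = m_G$. By Theorem 2.2(a), with $a := \lim_{n\to\infty} \frac{1}{n}\sum_{i=1}^{n} a_i$, we have
\begin{equation*}
\text{w}^{\ast}\text{-}\lim_{n\to\infty} \mu_n = a\, m_G.
\end{equation*}

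Next I would apply Proposition 2.8, which upgrades this weak$^{\ast}$ convergence to uniform convergence of the convolutions: for every $f \in C(G)$,
\begin{equation*}
\lim_{n\to\infty} (\mu_n \ast f)(g) = (a\, m_G \ast f)(g) \quad \text{uniformly in } g \in G.
\end{equation*}

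Finally I would identify the limit. Since Haar measure on a compact group is both left- and right-invariant and inversion-invariant, for every $g \in G$,
\begin{equation*}
(m_G \ast f)(g) = \int_G f(h^{-1}g)\, dm_G(h) = \int_G f(h)\, dm_G(h),
\end{equation*}
using right-invariance followed by inversion-invariance. Thus $a\, m_G \ast f$ is the constant function with value $a \int_G f\, dm_G$, and substituting this into the convergence statement above yields the claimed conclusion.

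No real obstacle arises here: Theorem 2.2(a) does all the spectral/representation-theoretic work, Proposition 2.8 does the topological upgrade, and the identification $m_G \ast f \equiv \int_G f\, dm_G$ is a direct consequence of the invariance of normalized Haar measure on a compact group.
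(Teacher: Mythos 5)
Your proof is correct and follows exactly the route the paper intends: the corollary is stated as an immediate consequence of Theorem 2.2(a) (with adaptedness giving $\left[ \func{supp}\mu \right] =G$, so the limit measure is $a\,m_{G}$) combined with Proposition 2.8, and your identification of $m_{G}\ast f$ with the constant $\int_{G}f\,dm_{G}$ via the invariance of normalized Haar measure is the right finishing step.
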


Recall that a measure $\mu \in M\left( G\right) $ is said to be \textit{%
power bounded if }%
\begin{equation*}
C_{\mu }:=\sup_{n\geq 0}\left\Vert \mu ^{n}\right\Vert _{1}<\infty .
\end{equation*}%
If $\mu \in M\left( G\right) $ is power bounded, then so is the operator $%
\lambda _{p}\left( \mu \right) $ $\left( 1<p<\infty \right) .$

The following result was proved in \cite[Theorem 3.4]{10}. The same result
for locally compact abelian groups was obtained earlier in \cite[Proposition
2.5]{19}. For completeness we give the proof of this result. Our proof is
different.

\begin{proposition}
If $\mu \in M\left( G\right) $\textit{\ is power bounded, then there exists
an idempotent measure }$\theta _{\mu }$\textit{\ in }$M\left( G\right) $
\textit{such that}%
\begin{equation*}
w^{\ast }-\lim_{n\rightarrow \infty }\frac{1}{n}\sum_{i=1}^{n}\mu
^{i}=\theta _{\mu }
\end{equation*}%
$(\theta _{\mu }$\textit{\ will be called limit measure associated with }$%
\mu ).$
\end{proposition}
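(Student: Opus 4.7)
The plan is to transfer the problem to the reflexive Banach space $L^p(G)$ (for some fixed $1<p<\infty$), where the mean ergodic theorem is available, and then push the information back to $M(G)$ by duality. Since $\mu$ is power bounded, $\|\lambda_p(\mu)^n\| \leq \|\mu^n\|_1 \leq C_\mu$, so $\lambda_p(\mu)$ is power bounded on the reflexive space $L^p(G)$; by the classical mean ergodic theorem it is therefore mean ergodic, and there is a bounded projection $P$ on $L^p(G)$ with
\[
m_n\ast f := \frac{1}{n}\sum_{i=1}^{n}\mu^{i}\ast f \longrightarrow Pf \quad \text{in } L^p\text{-norm, for every } f\in L^p(G).
\]

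To upgrade this to weak$^{\ast}$ convergence in $M(G)$, I would test against the dense subspace $\{h\ast f^\vee : h\in L^q(G),\; f\in L^p(G)\}$ of $C_0(G)$ already recorded in the preliminaries. The identity $\langle m_n, h\ast f^\vee\rangle = \langle m_n\ast f, h\rangle$ shows $\langle m_n, h\ast f^\vee\rangle \to \langle Pf, h\rangle$ for every such pair. Combined with $\sup_n \|m_n\|_1 \leq C_\mu$, a standard $\varepsilon/3$-argument promotes this to convergence of $\langle m_n, \varphi\rangle$ for every $\varphi\in C_0(G)$. The limit functional is bounded on $C_0(G)$, hence is represented via Riesz duality by some $\theta_\mu \in M(G)$, so $w^{\ast}\text{-}\lim_n m_n = \theta_\mu$.

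For idempotence, passing to the limit in the same identity yields $\langle \theta_\mu\ast f, h\rangle = \langle Pf, h\rangle$ for all $f\in L^p(G)$, $h\in L^q(G)$, so $\lambda_p(\theta_\mu)=P$ as operators on $L^p(G)$. From $P^2=P$ we immediately obtain $\lambda_p(\theta_\mu^{2}-\theta_\mu)=0$. Finally, $\lambda_p: M(G)\to B(L^p(G))$ is injective: if $\nu\ast f = 0$ in $L^p(G)$ for every $f\in L^p(G)$, then $\langle \nu, h\ast f^\vee\rangle = \langle \nu\ast f, h\rangle = 0$ for all admissible $h$ and $f$, and the density of $\{h\ast f^\vee\}$ in $C_0(G)$ forces $\nu = 0$. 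Applying this to $\nu := \theta_\mu^{2}-\theta_\mu$ gives $\theta_\mu^{2}=\theta_\mu$, as required.

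The main obstacle is the last step: converting the operator identity $\lambda_p(\theta_\mu)^2 = \lambda_p(\theta_\mu)$ on the reflexive space into the measure identity $\theta_\mu^{2}=\theta_\mu$ in $M(G)$. The existence of the weak$^{\ast}$ limit is a routine consequence of the mean ergodic theorem and uniform boundedness, but idempotence genuinely requires that $\lambda_p$ separate points of $M(G)$, which in turn rests on the $C_0(G)$-density of the convolution products $h\ast f^\vee$ already established in the preliminaries.
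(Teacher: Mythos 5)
Your argument is essentially the paper's own proof: the paper also transfers the problem to a reflexive convolution setting (it uses $p=2$ and the pairing $\langle m_n,u\ast\widetilde{\upsilon}\rangle=\langle m_n\ast\overline{\upsilon},\overline{u}\rangle$), invokes mean ergodicity of the power bounded operator $\lambda_2(\mu)$, and combines the uniform bound $\sup_n\|m_n\|_1\le C_\mu$ with the $C_0(G)$-density of the convolution products to produce $\theta_\mu$. Your explicit idempotence step, identifying $\lambda_p(\theta_\mu)$ with the mean ergodic projection and using injectivity of $\lambda_p$ on $M(G)$, correctly fills in what the paper dismisses as ``easy to check.''
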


\begin{proof}
Let $u,\upsilon \in L^{2}\left( G\right) $. Since the operator $\lambda
_{2}\left( \mu \right) $ is mean ergodic, there exists $w\in L^{2}\left(
G\right) $ such that%
\begin{equation*}
\lim_{n\rightarrow \infty }\frac{1}{n}\sum_{i=1}^{n}\mu ^{i}\ast \overline{%
\upsilon }=w\text{ \ in }L^{2}\text{-norm.}
\end{equation*}%
Therefore we have%
\begin{equation*}
\lim_{n\rightarrow \infty }\langle \frac{1}{n}\sum_{i=1}^{n}\mu ^{i},u\ast
\widetilde{\upsilon }\rangle =\lim_{n\rightarrow \infty }\langle \frac{1}{n}%
\sum_{i=1}^{n}\mu ^{i}\ast \overline{\upsilon },\overline{u}\rangle =\langle
w,\overline{u}\rangle .
\end{equation*}%
Since the sequence $\left\{ \frac{1}{n}\sum_{i=1}^{n}\mu ^{i}\right\} _{n\in
%TCIMACRO{\U{2115} }%
%BeginExpansion
\mathbb{N}
%EndExpansion
}$ is bounded and the set $\left\{ u\ast \widetilde{\upsilon }:u,\upsilon
\in L^{2}\left( G\right) \right\} $ is linearly dense in $C_{0}\left(
G\right) ,$ the limit $\lim_{n\rightarrow \infty }\langle m_{n},f\rangle \ \
$exists for all $f\in C_{0}\left( G\right) .$ Consequently, there exists $%
\theta _{\mu }\in M\left( G\right) $ such that%
\begin{equation*}
\lim_{n\rightarrow \infty }\langle \frac{1}{n}\sum_{i=1}^{n}\mu
^{i},f\rangle =\langle \theta _{\mu },f\rangle \text{, \ }\forall f\in
C_{0}\left( G\right) .
\end{equation*}%
So we have
\begin{equation*}
\text{w}^{\ast }-\lim_{n\rightarrow \infty }\frac{1}{n}\sum_{i=1}^{n}\mu
^{i}=\theta _{\mu }.
\end{equation*}%
It is easy to check that $\theta _{\mu }$ is an idempotent measure.
\end{proof}

\begin{remark}
Let $G$ be a locally compact abelian group and let $\mu \in M\left( G\right)
$\textit{\ }be\textit{\ }power bounded\textit{. }If $\theta _{\mu }$\textit{%
\ }is the limit measure associated with\textit{\ }$\mu ,$ then $\widehat{%
\theta _{\mu }}=\mathbf{1}_{\text{int}\mathcal{F}_{\mu }},$ where $\mathcal{F%
}_{\mu }:=\left\{ \chi \in \widehat{G}:\widehat{\mu }\left( \chi \right)
=1\right\} $ and $\mathbf{1}_{\text{int}\mathcal{F}_{\mu }}$ is the
characteristic function of int$\mathcal{F}_{\mu }$. Consequently, int$%
\mathcal{F}_{\mu }$ is a clopen subset of $\widehat{G}$ \cite[Proposition 2.5%
]{19}. It is easy to check that if $\mu $ is a probability measure, then $%
\mathcal{F}_{\mu }$ is a closed subgroup of $\widehat{G},$ that is, $%
\mathcal{F}_{\mu }=\left[ \func{supp}\mu \right] ^{\bot }.$
\end{remark}

\begin{corollary}
If $\mu \in M\left( G\right) $\textit{\ is power bounded and }$1<p<\infty ,$
\textit{then }%
\begin{equation*}
\lim_{n\rightarrow \infty }\frac{1}{n}\sum_{i=1}^{n}\mu ^{i}\ast f=\theta
_{\mu }\ast f\text{ \ in }L^{p}\text{-norm for every }f\in L^{p}\left(
G\right) ,
\end{equation*}%
where $\theta _{\mu }$\textit{\ is the limit measure associated with }$\mu $
$(f\rightarrow \theta _{\mu }\ast f$ is the mean ergodic projection
associated with $\lambda _{p}\left( \mu \right) ).$
\end{corollary}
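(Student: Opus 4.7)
The plan is to combine the mean ergodic theorem on the reflexive space $L^p(G)$ with the weak-$\ast$ convergence supplied by Proposition~2.10, and to identify the two limits through the standard duality between the action of $M(G)$ on $L^p(G)$ and the pairing with $C_0(G)$.

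First, since $\mu$ is power bounded, so is the convolution operator $T := \lambda_p(\mu)$ on $X := L^p(G)$, because $\Vert T^n f\Vert_p = \Vert \mu^n \ast f\Vert_p \le \Vert \mu^n\Vert_1\Vert f\Vert_p \le C_\mu \Vert f\Vert_p$. As $L^p(G)$ is reflexive for $1<p<\infty$, the result cited in Section~2 (a power bounded operator on a reflexive Banach space is mean ergodic) produces, for each $f \in L^p(G)$,
\begin{equation*}
P_p f := \lim_{n \to \infty} \frac{1}{n}\sum_{i=1}^{n} \mu^{i} \ast f \quad \text{in } L^{p}\text{-norm},
\end{equation*}
where $P_p$ is the mean ergodic projection of $\lambda_p(\mu)$ onto $\ker(T-I)$ along $\overline{\operatorname{ran}(T-I)}$.

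Next, identify $P_p f$ with $\theta_\mu \ast f$. Set $m_n := \frac{1}{n}\sum_{i=1}^{n}\mu^{i}$; by Proposition~2.10, $m_n \to \theta_\mu$ in the w$^{\ast}$-topology of $M(G)$. Fix $h \in L^q(G)$ with $1/p + 1/q = 1$. The preliminary discussion in the paper records that $h \ast f^{\vee} \in C_0(G)$ and
\begin{equation*}
\langle \nu \ast f, h \rangle = \langle \nu, h \ast f^{\vee} \rangle, \qquad \nu \in M(G).
\end{equation*}
Specialising to $\nu = m_n$ and passing to the limit, the w$^{\ast}$-convergence of $m_n$ in $M(G) = C_0(G)^{\ast}$ gives
\begin{equation*}
\langle m_n \ast f, h \rangle = \langle m_n, h \ast f^{\vee} \rangle \longrightarrow \langle \theta_\mu, h \ast f^{\vee} \rangle = \langle \theta_\mu \ast f, h \rangle,
\end{equation*}
so $m_n \ast f \to \theta_\mu \ast f$ weakly in $L^p(G)$. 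The first paragraph already gives $m_n \ast f \to P_p f$ in $L^p$-norm, hence weakly, and uniqueness of weak limits forces $P_p f = \theta_\mu \ast f$.

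The parenthetical claim that $f \mapsto \theta_\mu \ast f$ is the mean ergodic projection of $\lambda_p(\mu)$ is then immediate from the first paragraph. I do not foresee any real obstacle: the only point requiring care is invoking the correct duality identity, whose applicability rests on the fact, recorded before Corollary~2.7, that $h \ast f^{\vee}$ belongs to $C_0(G)$ so that w$^{\ast}$-convergence of $m_n$ can be tested against it.
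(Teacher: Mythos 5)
Your proposal is correct and is essentially the paper's own argument: both combine the mean ergodicity of $\lambda _{p}\left( \mu \right) $ on the reflexive space $L^{p}\left( G\right) $ with the identification of the limit through the duality $\langle \nu \ast f,h\rangle =\langle \nu ,h\ast f^{\vee }\rangle $ and the w$^{\ast }$-convergence from Proposition 2.10. The only difference is the order (the paper establishes weak convergence first and then upgrades to norm convergence, while you obtain norm convergence first and then identify the limit), which is immaterial.
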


\begin{proof}
Let $f\in L^{p}\left( G\right) $ and $h\in L^{q}\left( G\right) $ $%
(1/p+1/q=1).$ Since
\begin{equation*}
\text{w}^{\ast }-\lim_{n\rightarrow \infty }\frac{1}{n}\sum_{i=1}^{n}\mu
^{i}=\theta _{\mu }\text{ \ and \ }h\ast f^{\vee }\in C_{0}\left( G\right) ,
\end{equation*}%
we can write
\begin{eqnarray*}
\lim_{n\rightarrow \infty }\langle \frac{1}{n}\sum_{i=1}^{n}\mu ^{i}\ast
f,h\rangle &=&\lim_{n\rightarrow \infty }\langle \frac{1}{n}%
\sum_{i=1}^{n}\mu ^{i},h\ast f^{\vee }\rangle \\
&=&\langle \theta _{\mu },h\ast f^{\vee }\rangle =\langle \theta _{\mu }\ast
f,h\rangle .
\end{eqnarray*}%
This shows that
\begin{equation*}
\lim_{n\rightarrow \infty }\frac{1}{n}\sum_{i=1}^{n}\mu ^{i}\ast f=\theta
_{\mu }\ast f\ \ \text{weakly.}
\end{equation*}%
Since the operator $\lambda _{p}\left( \mu \right) $ is mean ergodic, we have%
\begin{equation*}
\lim \frac{1}{n}\sum_{i=1}^{n}\mu ^{i}\ast f=\theta _{\mu }\ast f\text{ \ in
}L^{p}\text{-norm.}
\end{equation*}
\end{proof}

Let $\mu \in M\left( G\right) $ be power bounded and $\xi \in \mathbb{T}$.
Denote by $\theta _{\mu }^{\xi }$ the limit measure associated with\textit{\
}$\xi \mu .$ By Proposition 2.10, $\theta _{\mu }^{\xi }$ is an idempotent
measure and
\begin{equation*}
\text{w}^{\ast }-\lim_{n\rightarrow \infty }\frac{1}{n}\sum_{i=1}^{n}\xi
^{i}\mu ^{i}=\theta _{\mu }^{\xi }.
\end{equation*}

\begin{theorem}
Let $G$ be a second countable locally compact group and let $\mu \in M\left(
G\right) $ be a measure with $\left\Vert \mu \right\Vert _{1}\leq 1.$ The
following assertions hold:

$\left( a\right) $ $\sigma _{p}\left( \lambda _{2}\left( \mu \right) \right)
\cap \mathbb{T}$ is at most countable.

$\left( b\right) $ If $\left\{ a_{n}\right\} _{n\in
%TCIMACRO{\U{2115} }%
%BeginExpansion
\mathbb{N}
%EndExpansion
}$ is a bounded \textit{good weight and }$\sigma _{p}\left( \lambda
_{2}\left( \mu \right) \right) \cap \mathbb{T=}\left\{ \xi _{1},\xi
_{2},...\right\} ,$ then
\begin{equation*}
\text{w}^{\ast }-\lim_{n\rightarrow \infty }\frac{1}{n}\sum_{i=1}^{n}a_{i}%
\mu ^{i}=\sum_{i=1}^{\infty }a\left( \xi _{i}\right) \theta _{\mu }^{\xi
_{i}},
\end{equation*}%
where $\theta _{\mu }^{\xi _{i}}$ is the limit measure associated with $\xi
_{i}\mu $ and%
\begin{equation*}
\lim_{n\rightarrow \infty }\frac{1}{n}\sum_{k=1}^{n}a_{k}\xi
_{i}^{k}=a\left( \xi _{i}\right) .
\end{equation*}
\end{theorem}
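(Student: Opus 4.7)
The plan for part (a) is to use the classical fact that for a contraction $T$ on a Hilbert space, distinct eigenvalues on the unit circle have mutually orthogonal eigenspaces. If $Tx = \xi x$ with $|\xi|=1$, the equality case in $\|Tx\|^{2} \leq \|x\|^{2}$ forces $T^{\ast }Tx = x$ and hence $T^{\ast }x = \bar{\xi}x$; pairing with another unitary eigenvector $y$ of eigenvalue $\eta$ gives $\xi\langle x,y\rangle = \langle Tx, y\rangle = \langle x, T^{\ast }y\rangle = \eta\langle x,y\rangle$, so $\xi \neq \eta$ forces $\langle x,y\rangle = 0$. Since $G$ is second countable, $L^{2}(G)$ is separable, so only countably many pairwise orthogonal nontrivial eigenspaces can exist. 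Applying this to $T = \lambda_{2}(\mu)$, a contraction by $\|\mu\|_{1}\leq 1$, gives (a).

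For (b) the strategy is to apply the good-weight identity (2.2) to $T = \lambda_{2}(\mu)$ on $L^{2}(G)$ and then transfer the $L^{2}$-convergence to $\mathrm{w}^{\ast }$-convergence in $M(G)$ by the same density device used in the proof of Theorem 2.2(b). Set $m_{n} := \tfrac{1}{n}\sum_{i=1}^{n} a_{i}\mu^{i}$. Identity (2.2) yields, for every $f \in L^{2}(G)$,
\begin{equation*}
\lim_{n\to\infty} m_{n} \ast f = \sum_{i} a(\xi_{i}) P_{\xi_{i}} f \quad\text{in } L^{2}\text{-norm},
\end{equation*}
where $P_{\xi_{i}}$ is the orthogonal projection onto $\ker(\lambda_{2}(\mu) - \xi_{i}I)$. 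The key identification is that each $P_{\xi_{i}}$ coincides with convolution by the idempotent measure $\theta_{\mu}^{\xi_{i}}$ (up to the standard $\xi_{i} \leftrightarrow \bar{\xi}_{i}$ bookkeeping in the indexing of the countable set of unitary eigenvalues): applying Corollary 2.12 to the contraction $\xi_{i}\mu$ shows that $f \mapsto \theta_{\mu}^{\xi_{i}} \ast f$ is the mean ergodic projection of $\lambda_{2}(\xi_{i}\mu) = \xi_{i}\lambda_{2}(\mu)$, hence the orthogonal projection onto the appropriate unitary eigenspace of $\lambda_{2}(\mu)$.

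The passage to a measure-valued $\mathrm{w}^{\ast }$-limit is then a density/boundedness argument. Clearly $\{m_{n}\}$ is uniformly bounded in $M(G)$ by $\sup_{n}|a_{n}|$. For $u,v \in L^{2}(G)$, $u \ast \tilde{v} \in C_{0}(G)$ and
\begin{equation*}
\langle m_{n}, u \ast \tilde{v}\rangle = \langle m_{n} \ast \bar{v}, \bar{u}\rangle,
\end{equation*}
so the $L^{2}$-convergence above, applied to $f = \bar{v}$, gives
\begin{equation*}
\lim_{n} \langle m_{n}, u \ast \tilde{v}\rangle = \sum_{i} a(\xi_{i})\,\langle \theta_{\mu}^{\xi_{i}}, u \ast \tilde{v}\rangle.
\end{equation*}
Since $\{u\ast\tilde{v} : u,v \in L^{2}(G)\}$ is linearly dense in $C_{0}(G)$ and $\{m_{n}\}$ is uniformly bounded in $M(G)$, $\lim_{n}\langle m_{n}, f\rangle$ exists for every $f \in C_{0}(G)$ and defines a bounded linear functional on $C_{0}(G)$, represented by a unique $\theta \in M(G)$. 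Thus $\mathrm{w}^{\ast }\text{-}\lim m_{n} = \theta$ with $\langle \theta, f\rangle = \sum_{i} a(\xi_{i})\langle \theta_{\mu}^{\xi_{i}}, f\rangle$ for every $f \in C_{0}(G)$, which is precisely the asserted identity interpreted in the $\mathrm{w}^{\ast }$-sense.

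The main subtlety is the meaning of the infinite sum $\sum_{i} a(\xi_{i})\theta_{\mu}^{\xi_{i}}$: the partial sums need not converge in the total variation norm of $M(G)$, and the individual idempotents $\theta_{\mu}^{\xi_{i}}$ are not orthogonal as measures; the series must be read in the $\mathrm{w}^{\ast }$-sense. What makes the density extension legitimate is that, for each fixed pair $u,v \in L^{2}(G)$, the scalar series $\sum_{i} a(\xi_{i})\langle \theta_{\mu}^{\xi_{i}}, u \ast \tilde{v}\rangle$ is absolutely convergent: the ranges of the $P_{\xi_{i}}$ are pairwise orthogonal in $L^{2}(G)$, so Bessel's inequality together with $\sup_{i}|a(\xi_{i})| \leq \sup_{n}|a_{n}| < \infty$ gives the required summability, completing the identification of the $\mathrm{w}^{\ast }$-limit.
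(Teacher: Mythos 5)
Your proposal is correct and follows essentially the same route as the paper: part (b) is the paper's argument almost verbatim --- identity (2.2) applied to $\lambda _{2}\left( \mu \right) $, identification of the spectral projections $P_{\xi _{i}}$ with convolution by the limit measures $\theta _{\mu }^{\xi _{i}}$ via Corollary 2.12, and then the density of $\left\{ u\ast \widetilde{\upsilon }\right\} $ in $C_{0}\left( G\right) $ together with the uniform bound $\sup_{n}\left\Vert m_{n}\right\Vert _{1}\leq \sup_{n}\left\vert a_{n}\right\vert $ to pass to the w$^{\ast }$-limit. For part (a) the paper simply cites Jamison's theorem, whereas you inline the standard proof of its Hilbert-space contraction case (distinct unitary eigenspaces of a contraction are orthogonal, plus separability of $L^{2}\left( G\right) $); your added remarks on the absolute convergence of the scalar series via Bessel's inequality and on the $\xi _{i}\leftrightarrow \overline{\xi _{i}}$ bookkeeping in matching $\ker \left[ \lambda _{2}\left( \mu \right) -\xi _{i}I\right] $ with the mean ergodic projection of $\xi _{i}\lambda _{2}\left( \mu \right) $ address details the paper leaves implicit.
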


\begin{proof}
(a) Since $L^{2}\left( G\right) $ is separable and $\lambda _{2}\left( \mu
\right) $ is a contraction, by the Jamison theorem \cite{12}, $\sigma
_{p}\left( \lambda _{2}\left( \mu \right) \right) \cap \mathbb{T}$ is at
most countable.

(b) Let $f\in L^{2}\left( G\right) $ and $\xi \in \mathbb{T}$ be given. By
Corollary 2.12,
\begin{equation*}
\frac{1}{n}\sum_{i=1}^{n}\xi ^{i}\lambda _{2}\left( \mu \right)
^{i}f\rightarrow \theta _{\mu }^{\xi }\ast f\ \ \text{in }L^{2}\text{-norm.}
\end{equation*}%
Notice that $f\mapsto \theta _{\mu }^{\xi }\ast f$ is the orthogonal
projection onto $\ker \left[ \lambda _{2}\left( \mu \right) -\xi I\right] .$
Taking into account the identity (2.2), for an arbitrary $u,\upsilon \in
L^{2}\left( G\right) $ we can write%
\begin{eqnarray*}
\lim_{n\rightarrow \infty }\langle \frac{1}{n}\sum_{i=1}^{n}a_{i}\mu
^{i},u\ast \widetilde{\upsilon }\rangle &=&\lim_{n\rightarrow \infty
}\langle \frac{1}{n}\sum_{i=1}^{n}a_{i}\lambda _{2}\left( \mu \right) ^{i}%
\overline{u},\overline{v}\rangle \\
&=&\langle \sum_{i=1}^{\infty }a\left( \xi _{i}\right) \theta _{\mu }^{\xi
_{i}}\ast \overline{u},\overline{v}\rangle \\
&=&\langle \sum_{i=1}^{\infty }a\left( \xi _{i}\right) \theta _{\mu }^{\xi
_{i}},u\ast \widetilde{\upsilon }\rangle .
\end{eqnarray*}%
Since%
\begin{equation*}
\sup_{n\in
%TCIMACRO{\U{2115} }%
%BeginExpansion
\mathbb{N}
%EndExpansion
}\left\Vert \frac{1}{n}\sum_{i=1}^{n}a_{i}\mu ^{i}\right\Vert _{1}\leq
\sup_{n\in
%TCIMACRO{\U{2115} }%
%BeginExpansion
\mathbb{N}
%EndExpansion
}\left\vert a_{n}\right\vert <\infty
\end{equation*}%
and the set $\left\{ u\ast \widetilde{\upsilon }:u,\upsilon \in L^{2}\left(
G\right) \right\} $ is linearly dense in $C_{0}\left( G\right) ,$ we have
\begin{equation*}
\text{w}^{\ast }-\lim_{n\rightarrow \infty }\frac{1}{n}\sum_{i=1}^{n}a_{i}%
\mu ^{i}=\sum_{i=1}^{\infty }a\left( \xi _{i}\right) \theta _{\mu }^{\xi
_{i}}.
\end{equation*}
\end{proof}

\section{The sequence $\left\{ \protect\mu ^{n}\right\} _{n\in
%TCIMACRO{\U{2115} }%
%BeginExpansion
\mathbb{N}
%EndExpansion
}$}

As we have noted above, $\left\Vert \lambda _{1}\left( \mu \right)
\right\Vert =\left\Vert \mu \right\Vert _{1}$ for all $\mu \in M\left(
G\right) .$ Moreover, we have $\sigma \left( \lambda _{1}\left( \mu \right)
\right) =\sigma _{M\left( G\right) }\left( \mu \right) $ for all $\mu \in
M\left( G\right) ,$ where $\sigma _{M\left( G\right) }\left( \mu \right) $
is the spectrum of $\mu $ with respect to the algebra $M\left( G\right) .$

\begin{proposition}
If $\mu \in M\left( G\right) $ is power bounded with $\sigma \left( \lambda
_{1}\left( \mu \right) \right) \cap \mathbb{T\subseteq }\left\{ 1\right\} ,$
then there exists an idempotent measure $\theta \in M\left( G\right) $ such
that%
\begin{equation*}
w^{\ast }-\lim_{n\rightarrow \infty }\mu ^{n}=\theta .
\end{equation*}
\end{proposition}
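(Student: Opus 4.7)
The plan is to combine the Katznelson--Tzafriri theorem with the mean ergodic machinery for $\lambda_2(\mu)$ already assembled in Proposition 2.10 and Corollary 2.12, in the spirit of Proposition 2.1. The idea is to upgrade Ces\`aro convergence of $\{\mu^n\}$ to plain convergence by using the spectral hypothesis to kill the differences $\mu^{n+1}-\mu^n$.

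First, I apply the Katznelson--Tzafriri theorem to $\lambda_1(\mu)$. Since $\mu$ is power bounded and $\|\lambda_1(\mu)^n\|=\|\mu^n\|_1\leq C_\mu$, the operator $\lambda_1(\mu)$ is power bounded on $L^1(G)$. Using the identity $\sigma(\lambda_1(\mu))=\sigma_{M(G)}(\mu)$ recalled at the start of Section 3, the hypothesis $\sigma(\lambda_1(\mu))\cap\mathbb{T}\subseteq\{1\}$ lets Katznelson--Tzafriri yield $\|\lambda_1(\mu)^{n+1}-\lambda_1(\mu)^n\|\to 0$, which, since $\|\lambda_1(\nu)\|_1=\|\nu\|_1$, is exactly
\begin{equation*}
\|\mu^{n+1}-\mu^n\|_1 \longrightarrow 0.
\end{equation*}
Next I pass to $L^2(G)$: the operator $\lambda_2(\mu)$ is power bounded on a reflexive space and hence mean ergodic, and by Corollary 2.12 its mean ergodic projection is $f\mapsto\theta_\mu\ast f$, where $\theta_\mu$ is the limit measure of Proposition 2.10. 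For every $f\in L^2(G)$ one has $\|(\mu^{n+1}-\mu^n)\ast f\|_2\leq\|\mu^{n+1}-\mu^n\|_1\|f\|_2\to 0$, so Proposition 2.1 delivers $\mu^n\ast f\to\theta_\mu\ast f$ in $L^2$-norm for every $f\in L^2(G)$.

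Finally, I promote this to weak-$\ast$ convergence in $M(G)$ via the dense class $\{u\ast\widetilde{v}:u,v\in L^2(G)\}\subset C_0(G)$ already used in Proposition 2.10. For $u,v\in L^2(G)$ the pairing noted before Corollary 2.6 gives
\begin{equation*}
\langle \mu^n, u\ast\widetilde{v}\rangle = \langle \mu^n\ast\overline{v},\overline{u}\rangle \longrightarrow \langle \theta_\mu\ast\overline{v},\overline{u}\rangle = \langle \theta_\mu, u\ast\widetilde{v}\rangle,
\end{equation*}
and combined with $\sup_n\|\mu^n\|_1\leq C_\mu<\infty$ and the density of such functions in $C_0(G)$ this yields $w^{\ast}\text{-}\lim_n\mu^n=\theta_\mu$. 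Setting $\theta:=\theta_\mu$, idempotency is part of Proposition 2.10. The only real content is the first step: one must read the spectral hypothesis at the level of $\lambda_1$, where operator norms coincide with $L^1$-norms, so that Katznelson--Tzafriri produces $L^1$-smallness of $\mu^{n+1}-\mu^n$; once this is in hand the remainder is a direct assembly of the tools already built in Section 2.
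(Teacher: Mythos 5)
Your proof is correct, and the first step (reading the spectral hypothesis on $\lambda _{1}\left( \mu \right) $, where the operator norm equals the total variation norm, so that Katznelson--Tzafriri yields $\left\Vert \mu ^{n+1}-\mu ^{n}\right\Vert _{1}\rightarrow 0$) coincides with the paper's. Where you diverge is in how you convert this $L^{1}$-smallness of differences into weak$^{\ast }$ convergence. You pass to $L^{2}\left( G\right) $, invoke mean ergodicity of $\lambda _{2}\left( \mu \right) $ on a reflexive space together with Corollary 2.12 to identify the mean ergodic projection as $f\mapsto \theta _{\mu }\ast f$, apply Proposition 2.1 to get $\mu ^{n}\ast f\rightarrow \theta _{\mu }\ast f$ in $L^{2}$-norm, and then transfer back to $M\left( G\right) $ through the dense class $\left\{ u\ast \widetilde{\upsilon }\right\} $ in $C_{0}\left( G\right) $. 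The paper instead stays entirely at the level of $M\left( G\right) $: by Banach--Alaoglu and the uniform bound it suffices to show the sequence has a unique weak$^{\ast }$ cluster point, and this is done by taking two cluster points $\theta _{1},\theta _{2}$ along subnets, using separate weak$^{\ast }$-continuity of convolution and $\left\Vert \mu ^{n+1}-\mu ^{n}\right\Vert _{1}\rightarrow 0$ to derive $\mu \ast \theta _{1}=\theta _{1}\ast \mu =\theta _{1}$, and then passing to limits along the other subnet to get $\theta _{1}=\theta _{2}$ and $\theta ^{2}=\theta $. Your route buys an explicit identification of the limit as the Ces\`{a}ro limit measure $\theta _{\mu }$ of Proposition 2.10 and gets the idempotency for free from that proposition; the paper's route is self-contained at the measure level and does not need Corollary 2.12 or Proposition 2.1 (though it is essentially the argument the paper itself deploys later in Corollary 3.2). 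Both are complete proofs.
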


\begin{proof}
It suffices to show that the sequence $\left\{ \mu ^{n}\right\} _{n\in
%TCIMACRO{\U{2115} }%
%BeginExpansion
\mathbb{N}
%EndExpansion
}$ has only one weak$^{\ast }$ cluster point. Since $\sigma \left( \lambda
_{1}\left( \mu \right) \right) \cap \mathbb{T\subseteq }\left\{ 1\right\} $,
by the Katznelson-Tzafriri theorem mentioned above,
\begin{equation*}
\lim_{n\rightarrow \infty }\left\Vert \mu ^{n+1}-\mu ^{n}\right\Vert
_{1}=\lim_{n\rightarrow \infty }\left\Vert \lambda _{1}\left( \mu \right)
^{n+1}-\lambda _{1}\left( \mu \right) ^{n}\right\Vert =0.
\end{equation*}%
Assume that
\begin{equation*}
\theta _{1}=\text{w}^{\ast }\text{-}\lim_{\alpha }\mu ^{n_{\alpha }}\text{ \
and \ }\theta _{2}=\text{w}^{\ast }\text{-}\lim_{\beta }\mu ^{m_{\beta }},
\end{equation*}%
for two subnets $\left\{ \mu ^{n_{\alpha }}\right\} _{\alpha }$ and $\left\{
\mu ^{m_{\beta }}\right\} _{\beta }$ of $\left\{ \mu ^{n}\right\} _{n\in
%TCIMACRO{\U{2115} }%
%BeginExpansion
\mathbb{N}
%EndExpansion
}.$ Since the multiplication on $M\left( G\right) $ is separately w$^{\ast }$%
-continuous,
\begin{equation*}
\mu \ast \theta _{1}=\theta _{1}\ast \mu =\text{w}^{\ast }\text{-}%
\lim_{\alpha }\mu ^{n_{\alpha }+1}.
\end{equation*}%
Consequently we have%
\begin{equation*}
\left\Vert \mu \ast \theta _{1}-\theta _{1}\right\Vert _{1}\leq \underline{%
\lim }_{\alpha }\left\Vert \mu ^{n_{\alpha }+1}-\mu ^{n_{\alpha
}}\right\Vert =0
\end{equation*}%
and%
\begin{equation*}
\left\Vert \theta _{1}\ast \mu -\theta _{1}\right\Vert _{1}\leq \underline{%
\lim }_{\alpha }\left\Vert \mu ^{n_{\alpha }+1}-\mu ^{n_{\alpha
}}\right\Vert =0.
\end{equation*}%
Hence $\mu \ast \theta _{1}=\theta _{1}\ast \mu =\theta _{1}.$ Now, passing
to the limit (in the w$^{\ast }$-topology) in the identities%
\begin{equation*}
\mu ^{m_{\beta }}\ast \theta _{1}=\theta _{1}\ast \mu ^{m_{\beta }}=\theta
_{1},
\end{equation*}%
we have $\theta _{2}\ast \theta _{1}=\theta _{1}\ast \theta _{2}=\theta
_{1}. $ Similarly, $\theta _{2}\ast \theta _{1}=\theta _{1}\ast \theta
_{2}=\theta _{2}.$ If $\theta :=\theta _{1}=\theta _{2},$ then $\theta
^{2}=\theta .$
\end{proof}

Let $G$ be a locally compact abelian group. It is well known that $M\left(
G\right) $ is a commutative semisimple unital Banach algebra, but $M\left(
G\right) $ fails to be (Shilov) regular, in general. However, there exists a
largest closed regular subalgebra of $M\left( G\right) $ which we will
denote by $M_{\text{reg}}\left( G\right) .$ Since the algebra $L^{1}\left(
G\right) $ and the discrete measure algebra $M_{d}\left( G\right) $ are
subalgebras of $M_{\text{reg}}\left( G\right) ,$ we have $L^{1}\left(
G\right) +M_{d}\left( G\right) \subseteq M_{\text{reg}}\left( G\right) ,$
but in general, $L^{1}\left( G\right) +M_{d}\left( G\right) \neq M_{\text{reg%
}}\left( G\right) $ \cite[Example 4.3.11]{16}. This shows that the algebra $%
M_{\text{reg}}\left( G\right) $ is remarkable large. Recall \cite[%
Proposition 4.12.5]{16} that if $\mu \in M_{\text{reg}}\left( G\right) $,
then the operator $\lambda _{1}\left( \mu \right) $ has natural spectrum,
that is, $\sigma \left( \lambda _{1}\left( \mu \right) \right) =\overline{%
\widehat{\mu }\left( \widehat{G}\right) }$. It follows that if $\mu \in M_{%
\text{reg}}\left( G\right) $ is a probability measure, then $\sigma \left(
\lambda _{1}\left( \mu \right) \right) \cap \mathbb{T\subseteq }\left\{
1\right\} $ if and only if for any neighborhood $V$ of $1,$ $\sup_{\chi
\notin V}\left\vert \widehat{\mu }\left( \chi \right) \right\vert <1.$

\begin{corollary}
If $\mu \in M\left( G\right) $ is power bounded, then there exists an
idempotent measure $\theta \in M\left( G\right) $ such that
\begin{equation*}
w^{\ast }-\lim_{n\rightarrow \infty }\left( \frac{\delta _{e}+\mu }{2}%
\right) ^{n}=\theta .
\end{equation*}%
Moreover, for an arbitrary $f\in L^{p}\left( G\right) $ $\left( 1<p<\infty
\right) $ we have

\begin{equation*}
\lim_{n\rightarrow \infty }\left[ \left( \frac{\delta _{e}+\mu }{2}\right)
^{n}\ast f\right] =\theta \ast f\text{ \ in }L^{p}\text{-norm.}
\end{equation*}
\end{corollary}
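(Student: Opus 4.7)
The plan is to reduce this to Proposition 3.1 by verifying its hypotheses for the averaged measure $\nu := (\delta_e + \mu)/2$, and then to transfer the weak$^{\ast}$ convergence to $L^{p}$-norm convergence via the Katznelson--Tzafriri theorem together with Proposition 2.1.

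First I would observe that $\nu$ is itself power bounded. Indeed, by the binomial expansion
\begin{equation*}
\nu^{n} = \frac{1}{2^{n}} \sum_{k=0}^{n} \binom{n}{k} \mu^{k},
\end{equation*}
so $\|\nu^{n}\|_{1} \leq 2^{-n} \sum_{k=0}^{n} \binom{n}{k} \|\mu^{k}\|_{1} \leq C_{\mu}$. Next, I would check the spectral hypothesis of Proposition 3.1. Since $\lambda_{1}(\nu) = \tfrac{1}{2}(I + \lambda_{1}(\mu))$, the spectral mapping theorem gives
\begin{equation*}
\sigma(\lambda_{1}(\nu)) = \tfrac{1}{2}\bigl(1 + \sigma(\lambda_{1}(\mu))\bigr) \subseteq \tfrac{1}{2}(1 + \overline{\mathbb{D}}),
\end{equation*}
the closed disc of radius $\tfrac{1}{2}$ centred at $\tfrac{1}{2}$. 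This disc meets $\mathbb{T}$ only at $1$, so $\sigma(\lambda_{1}(\nu)) \cap \mathbb{T} \subseteq \{1\}$. Proposition 3.1 then yields an idempotent $\theta \in M(G)$ with $\mathrm{w}^{\ast}\text{-}\lim_{n} \nu^{n} = \theta$.

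For the $L^{p}$ statement, note that $\lambda_{p}(\mu)$ is power bounded on the reflexive space $L^{p}(G)$, hence so is $\lambda_{p}(\nu) = \tfrac{1}{2}(I + \lambda_{p}(\mu))$. The same spectral computation as above shows $\sigma(\lambda_{p}(\nu)) \cap \mathbb{T} \subseteq \{1\}$, so the Katznelson--Tzafriri theorem gives
\begin{equation*}
\lim_{n \to \infty} \bigl\|\lambda_{p}(\nu)^{n+1} - \lambda_{p}(\nu)^{n}\bigr\| = 0.
\end{equation*}
Since $\lambda_{p}(\nu)$ is mean ergodic (power bounded on a reflexive space), Proposition 2.1 then gives, for every $f \in L^{p}(G)$, that $\nu^{n} \ast f = \lambda_{p}(\nu)^{n} f$ converges in $L^{p}$-norm to the mean ergodic projection $P_{\lambda_{p}(\nu)} f$.

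It remains to identify this limit with $\theta \ast f$. For $f \in L^{p}(G)$ and $h \in L^{q}(G)$ with $1/p + 1/q = 1$, the function $h \ast f^{\vee}$ lies in $C_{0}(G)$, so the weak$^{\ast}$ convergence $\nu^{n} \to \theta$ yields
\begin{equation*}
\langle \nu^{n} \ast f, h \rangle = \langle \nu^{n}, h \ast f^{\vee} \rangle \to \langle \theta, h \ast f^{\vee} \rangle = \langle \theta \ast f, h \rangle,
\end{equation*}
so $\nu^{n} \ast f \to \theta \ast f$ weakly in $L^{p}(G)$. Combined with the norm convergence $\nu^{n} \ast f \to P_{\lambda_{p}(\nu)} f$, this forces $P_{\lambda_{p}(\nu)} f = \theta \ast f$, completing the proof. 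The only substantive step is the spectral inclusion for $\lambda_{1}(\nu)$ and $\lambda_{p}(\nu)$; once that is in hand, everything else is a direct invocation of Proposition 3.1, the Katznelson--Tzafriri theorem, and Proposition 2.1.
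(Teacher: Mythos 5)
Your proof is correct and follows essentially the same route as the paper: power boundedness of $\nu=(\delta_e+\mu)/2$, the spectral mapping theorem for $z\mapsto(1+z)/2$ to get $\sigma(\lambda(\nu))\cap\mathbb{T}\subseteq\{1\}$, Proposition 3.1 for the weak$^{\ast}$ limit, and Katznelson--Tzafriri plus Proposition 2.1 plus the duality $\langle\nu^{n}\ast f,h\rangle=\langle\nu^{n},h\ast f^{\vee}\rangle$ to identify the $L^{p}$-norm limit. The only (harmless) variation is that you apply Katznelson--Tzafriri directly to $\lambda_{p}(\nu)$ on $L^{p}(G)$, whereas the paper applies it to $\lambda_{1}(\nu)$ and transfers via $\Vert\nu^{n+1}-\nu^{n}\Vert_{1}\rightarrow 0$.
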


\begin{proof}
If $\nu :=\frac{\delta _{e}+\mu }{2}$, then $\nu $ is power bounded, that
is, $\sup_{n\in
%TCIMACRO{\U{2115} }%
%BeginExpansion
\mathbb{N}
%EndExpansion
}\left\Vert \nu ^{n}\right\Vert _{1}\leq C_{\mu }$. Therefore, the operator $%
\lambda _{1}\left( \nu \right) =\frac{I+\lambda _{1}\left( \mu \right) }{2}$
is power bounded. Notice also that if $f\left( z\right) :=\frac{1+z}{2}$ $%
\left( z\in
%TCIMACRO{\U{2102} }%
%BeginExpansion
\mathbb{C}
%EndExpansion
\right) ,$ then $f\left( 1\right) =1$ and $\left\vert f\left( z\right)
\right\vert <1$ for all $z\in \overline{\mathbb{D}}\diagdown \left\{
1\right\} .$ It follows from the spectral mapping theorem that $\sigma
\left( \lambda _{1}\left( \nu \right) \right) \cap \mathbb{T\subseteq }%
\left\{ 1\right\} .$ By Proposition 3.1, there exists an idempotent measure $%
\theta \in M\left( G\right) $ such that w$^{\ast }$-$\lim_{n\rightarrow
\infty }\nu ^{n}=\theta .$

As in the proof of Proposition 2.12 we have%
\begin{equation*}
\lim_{n\rightarrow \infty }\left( \nu ^{n}\ast f\right) =\theta \ast f\text{
\ weakly for every }f\in L^{p}\left( G\right) .
\end{equation*}%
On the other hand, by the Katznelson-Tzafriri theorem,
\begin{equation*}
\lim_{n\rightarrow \infty }\left\Vert \lambda _{1}\left( \nu \right)
^{n+1}-\lambda _{1}\left( \nu \right) ^{n}\right\Vert =0.
\end{equation*}%
Now, it follows from Proposition 2.1 that%
\begin{equation*}
\lim_{n\rightarrow \infty }\left( \nu ^{n}\ast f\right) =\theta \ast f\text{%
\ \ in }L^{p}\text{-norm.}
\end{equation*}
\end{proof}

As we have noted above, if $\mu $ is a strictly aperiodic measure on a
non-compact locally compact group $G,$ then w$^{\ast }$-$\lim_{n\rightarrow
\infty }\mu ^{n}=0$ \cite[Th\'{e}or\`{e}me 8]{3}. In \cite[Theorem 2]{18} it
was proved that if $\mu $ is an adapted measure on a second countable
locally compact non-compact group $G$, then w$^{\ast }$-$\lim_{n\rightarrow
\infty }\mu ^{n}=0$.

As is well known, equipped with the involution$\ $given by $d\widetilde{\mu }%
\left( g\right) =\overline{d\mu \left( g^{-1}\right) }$, the algebra $%
M\left( G\right) $ becomes a Banach $\ast $-algebra. If $\mu $ is a
probability measure on a locally compact group $G,$ then as $\func{supp}%
\widetilde{\mu }=\left( \func{supp}\mu \right) ^{-1}$, we have
\begin{equation*}
\func{supp}\left( \widetilde{\mu }\ast \mu \right) =\overline{\left\{ \left(
\func{supp}\mu \right) ^{-1}\cdot \left( \func{supp}\mu \right) \right\} }
\end{equation*}%
(for instance see, \cite[Theorem 2.2.2]{11}).

The following result is certainly known, but we have not been able to find a
precise reference.

\begin{proposition}
A measure $\mu \in M\left( G\right) $ is strictly aperiodic if and only if
the measure $\widetilde{\mu }\ast \mu $ is adapted.
\end{proposition}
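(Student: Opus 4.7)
The plan is to exploit the support identity $\operatorname{supp}(\widetilde{\mu}\ast\mu)=\overline{S^{-1}S}$ (with $S:=\operatorname{supp}\mu$) that the paper has just recalled, and then reduce the equivalence to a purely set-theoretic statement about subgroups and cosets of $G$. The whole proof rests on the following elementary lemma that I would state first: for a closed subgroup $H$ of $G$, one has $S\subseteq gH$ for some $g\in G$ if and only if $S^{-1}S\subseteq H$. The nontrivial direction is: if $S^{-1}S\subseteq H$, fix any $s_{0}\in S$ and observe that for every $s\in S$, $s_{0}^{-1}s\in S^{-1}S\subseteq H$, so $s\in s_{0}H$; the converse is immediate from $S^{-1}S\subseteq(gH)^{-1}(gH)=H^{-1}H=H$.

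For the forward direction, I would assume $\mu$ is strictly aperiodic and set $H:=[\,\overline{S^{-1}S}\,]$, a closed subgroup. Picking $s_{0}\in S$, the lemma gives $S\subseteq s_{0}H$. Strict aperiodicity forces the closed left coset $s_{0}H$ to be all of $G$, whence $H=G$; since $\operatorname{supp}(\widetilde{\mu}\ast\mu)=\overline{S^{-1}S}$ generates $H$, this says $\widetilde{\mu}\ast\mu$ is adapted.

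For the reverse direction, I would argue by contrapositive: if $\mu$ is not strictly aperiodic, then $S\subseteq gH$ for some $g\in G$ and some proper closed subgroup $H\subsetneq G$. The lemma then gives $S^{-1}S\subseteq H$, hence $\overline{S^{-1}S}\subseteq H$ and $[\,\overline{S^{-1}S}\,]\subseteq H\neq G$, so $\widetilde{\mu}\ast\mu$ is not adapted.

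There is really no serious obstacle here; the only point that needs a moment of care is distinguishing ``proper closed left coset'' (i.e.\ $gH$ with $H$ a proper closed subgroup) from ``proper closed subset that happens to be a coset,'' so that in the forward direction the conclusion $s_{0}H=G$ legitimately forces $H=G$. Once the coset/subgroup lemma is isolated cleanly, both implications are one-line consequences of it together with the support formula $\operatorname{supp}(\widetilde{\mu}\ast\mu)=\overline{S^{-1}S}$.
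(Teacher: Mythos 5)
Your proof is correct and follows essentially the same route as the paper: both arguments reduce the equivalence, via the support formula $\operatorname{supp}(\widetilde{\mu}\ast\mu)=\overline{(\operatorname{supp}\mu)^{-1}(\operatorname{supp}\mu)}$, to the elementary fact that $S\subseteq gH$ for some $g$ if and only if $S^{-1}S\subseteq H$. You merely isolate that fact as an explicit lemma and phrase one direction directly rather than contrapositively; the substance is identical.
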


\begin{proof}
Assume that the measure $\widetilde{\mu }\ast \mu $ is not adapted. This
means that
\begin{equation*}
\left[ \func{supp}\left( \widetilde{\mu }\ast \mu \right) \right] :=H\neq G.
\end{equation*}%
It follows that $g^{-1}\cdot \left( \func{supp}\mu \right) \subseteq H$ or $%
\func{supp}\mu \subseteq gH$ for all $g\in \func{supp}\mu .$ This shows that
$\mu $ is not strictly aperiodic. Now, assume that $\mu $ is not strictly
aperiodic. Then $\func{supp}\mu \subseteq gH$ for some closed proper
subgroup $H$ of $G$ and $g\in G$. It follows that $\left( \func{supp}\mu
\right) ^{-1}\subseteq Hg^{-1}$ and therefore $\left( \func{supp}\mu \right)
^{-1}\cdot \left( \func{supp}\mu \right) \subseteq H.$ Thus we have
\begin{equation*}
\left[ \func{supp}\left( \widetilde{\mu }\ast \mu \right) \right] \subseteq
H\neq G.
\end{equation*}%
This shows that the measure $\widetilde{\mu }\ast \mu $ is not adapted.
\end{proof}

Next, we have the following.

\begin{theorem}
Let $\mu $ be a probability measure on a locally compact group $G.$ If one
of the subgroups $\left[ \func{supp}\left( \widetilde{\mu }\ast \mu \right) %
\right] $ and $\left[ \func{supp}\left( \mu \ast \widetilde{\mu }\right) %
\right] $ is not compact, then
\begin{equation*}
w^{\ast }-\lim_{n\rightarrow \infty }\mu ^{n}=0.
\end{equation*}
\end{theorem}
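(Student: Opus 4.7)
The plan is to reduce, via symmetrization, to Csisz\'ar's theorem \cite[Th\'{e}or\`{e}me 8]{3} for $\nu:=\widetilde{\mu}\ast\mu$, then transfer dispersion of $\nu^{n}$ back to $\mu^{n}$ through the left regular representation on $L^{2}(G)$.

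Without loss of generality, assume $H:=\left[\func{supp}(\widetilde{\mu}\ast\mu)\right]$ is non-compact. (If only $\left[\func{supp}(\mu\ast\widetilde{\mu})\right]=\left[\func{supp}(\widetilde{\widetilde{\mu}}\ast\widetilde{\mu})\right]$ is non-compact, apply the theorem to $\widetilde{\mu}$ to get $\widetilde{\mu}^{n}\to 0$ weak-$\ast$; since involution is weak-$\ast$-continuous on $M(G)$ and $\widetilde{\mu^{n}}=\widetilde{\mu}^{n}$, this gives $\mu^{n}\to 0$ weak-$\ast$.) Now $\nu$ is a self-involutive probability measure, adapted on $H$ by construction. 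Since $e\in\func{supp}\nu$, $\func{supp}(\nu^{2})\supseteq\func{supp}\nu$, so $\nu^{2}=\widetilde{\nu}\ast\nu$ is also adapted on $H$; Proposition~3.3 applied inside the group $H$ shows that $\nu$ is strictly aperiodic on $H$. Csisz\'ar's theorem then gives $\text{w}^{\ast}\text{-}\lim_{n}\nu^{n}=0$ in $M(H)$, hence in $M(G)$.

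Set $T:=\lambda_{2}(\mu)$ on $L^{2}(G)$, so that $T^{\ast}T=\lambda_{2}(\nu)$. For $v\in L^{2}(G)$,
\begin{equation*}
\langle (T^{\ast}T)^{n}v,v\rangle = \int_{G}\langle\pi(g)v,v\rangle\,d\nu^{n}(g)\longrightarrow 0,
\end{equation*}
because the matrix coefficient $g\mapsto\langle\pi(g)v,v\rangle$ of the left regular representation $\pi$ lies in $C_{0}(G)$. Since $T^{\ast}T$ is a positive self-adjoint contraction, this weak convergence to $0$ forces its spectral measure to have no atom at $1$, and the spectral theorem upgrades the convergence to $\|(T^{\ast}T)^{n}v\|\to 0$ for every $v$.

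The main obstacle is the final step: deducing $\mu^{n}\to 0$ weak-$\ast$, equivalently (by linear density of $\{u\ast\widetilde{v}:u,v\in L^{2}(G)\}$ in $C_{0}(G)$ and Cauchy--Schwarz) $\|T^{n}v\|\to 0$ for every $v\in L^{2}(G)$. Writing $\|T^{n}v\|^{2}=\int\langle\pi(g)v,v\rangle\,d\eta_{n}(g)$ with $\eta_{n}:=\widetilde{\mu^{n}}\ast\mu^{n}$, the task reduces to $\eta_{n}\to 0$ weak-$\ast$ in $M(G)$. The difficulty is that $\eta_{n}\neq\nu^{n}$ in general (the $\widetilde{\mu}$-factors in $\eta_{n}$ are grouped together rather than interleaved with $\mu$'s), so the previous step does not apply directly to $\eta_{n}$. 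My approach would be to write $\mu=\delta_{g_{0}}\ast\mu_{0}$ with $g_{0}\in\func{supp}\mu$ and $\mu_{0}$ a strictly aperiodic probability measure on $H$, so that $\eta_{n}=\widetilde{\psi_{n}}\ast\psi_{n}$ for $\psi_{n}=\mu_{0}^{g_{0}^{n-1}}\ast\cdots\ast\mu_{0}^{g_{0}}\ast\mu_{0}$ (a convolution of $g_{0}$-conjugates of $\mu_{0}$), and then propagate the weak-$\ast$ dispersion of $\mu_{0}^{n}$ (given by Csisz\'ar on $H$) through this twisted product. An alternative route is to verify the Katznelson--Tzafriri spectral condition $\sigma(T)\cap\mathbb{T}\subseteq\{1\}$ so that Proposition~2.1 applies directly to $T$. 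I expect this non-abelian transfer step to be the technically demanding heart of the proof.
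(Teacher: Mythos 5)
Your reduction stalls at precisely the point where the paper's proof is easiest, because you have reduced to a statement that is too strong. By the linear density of $\left\{ u\ast \widetilde{\upsilon }:u,\upsilon \in L^{2}\left( G\right) \right\} $ in $C_{0}\left( G\right) $ and the boundedness of $\left\{ \mu ^{n}\right\} $, the conclusion w$^{\ast }$-$\lim_{n}\mu ^{n}=0$ is equivalent to $\langle T^{n}\overline{\upsilon },\overline{u}\rangle \rightarrow 0$ for all $u,\upsilon $, i.e.\ to convergence of $T^{n}=\lambda _{2}\left( \mu \right) ^{n}$ to $0$ in the \emph{weak} operator topology --- not to $\left\Vert T^{n}v\right\Vert \rightarrow 0$. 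Cauchy--Schwarz gives only the implication from strong to weak, not the converse, so your ``equivalently'' is wrong, and the strong statement you then chase is genuinely harder: the paper's Proposition 3.6 obtains $\left\Vert \mu ^{n}\ast f\right\Vert _{2}\rightarrow 0$ only under the \emph{additional} hypothesis that $\sigma \left( \lambda _{2}\left( \mu \right) \right) \cap \mathbb{T}$ has Lebesgue measure zero, so one should not expect to prove it under the hypotheses of Theorem 3.4 alone. Your proposed alternative via Katznelson--Tzafriri is blocked for the same reason: $\sigma \left( T\right) \cap \mathbb{T}\subseteq \left\{ 1\right\} $ is not among the hypotheses and generally fails. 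The analysis of the twisted products $\widetilde{\mu ^{n}}\ast \mu ^{n}$ that you flag as the ``technically demanding heart'' is a dead end that the correct argument never enters.

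The missing idea is the Nagy--Foia\c{s} theorem \cite[Chapter II, Theorem 3.9]{7}: a completely non-unitary contraction on a Hilbert space converges to $0$ in the weak operator topology. You have in fact already verified its hypothesis. Your spectral argument shows that the spectral measure of $T^{\ast }T=\lambda _{2}\left( \widetilde{\mu }\ast \mu \right) $ has no atom at $1$, i.e.\ $\ker \left( T^{\ast }T-I\right) =\left\{ 0\right\} $; since any vector $f$ in the unitary part of $T$ satisfies $\left\Vert Tf\right\Vert =\left\Vert f\right\Vert $ and hence $T^{\ast }Tf=f$, the unitary part is trivial, $T$ is completely non-unitary, and Nagy--Foia\c{s} closes the proof. (The paper obtains $\ker \left( T^{\ast }T-I\right) =\left\{ 0\right\} $ more directly from Lemma 2.3, since an $L^{2}$ function invariant under left translation by a non-compact subgroup vanishes a.e.; your detour through the theorem of \cite{3} applied to $\nu $ on $H$, while essentially correct, is unnecessary.)
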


\begin{proof}
Recall that a contraction $T$ on a Hilbert space is said to be \textit{%
completely non-unitary} if it has no proper reducing subspace on which it
acts as a unitary operator. By the Nagy-Foia\c{s} theorem \cite[Chapter II,
Theorem 3.9]{7} if $T$ is a completely non-unitary contraction, then $%
T^{n}\rightarrow 0$ in the weak operator topology. Now, assume that $\left[
\func{supp}\left( \widetilde{\mu }\ast \mu \right) \right] $ is not compact.
Let us show that $\lambda _{2}\left( \mu \right) $ is a completely
non-unitary contraction. Since $\lambda _{2}\left( \mu \right) ^{\ast
}=\lambda _{2}\left( \widetilde{\mu }\right) ,$ it suffices to show that $%
\lambda _{2}\left( \widetilde{\mu }\right) \lambda _{2}\left( \mu \right)
f=f $, where $f\in L^{2}\left( G\right) $ implies $f=0.$ By Lemma 2.3,%
\begin{eqnarray*}
\left\{ f\in L^{2}\left( G\right) :\lambda _{2}\left( \widetilde{\mu }%
\right) \lambda _{2}\left( \mu \right) f=f\right\} &=&\left\{ f\in
L^{2}\left( G\right) :\lambda _{2}\left( \widetilde{\mu }\ast \mu \right)
f=f\right\} \\
&=&\left\{ f\in L^{2}\left( G\right) :f_{g}=f\text{, }\forall g\in \left[
\func{supp}\left( \widetilde{\mu }\ast \mu \right) \right] \right\} .
\end{eqnarray*}%
Since $\left[ \func{supp}\left( \widetilde{\mu }\ast \mu \right) \right] $
is not compact, from the identity $f_{g}=f$ for all $g\in \left[ \func{supp}%
\left( \widetilde{\mu }\ast \mu \right) \right] $, we have $f=0$ (a.e.).
Hence $\lambda _{2}\left( \mu \right) $ is a completely non-unitary
contraction. Consequently, $\lambda _{2}\left( \mu \right) ^{n}\rightarrow 0$
in the weak operator topology. Then, for an arbitrary $u,\upsilon \in
L^{2}\left( G\right) $ we can write
\begin{equation*}
\langle \mu ^{n},u\ast \widetilde{\upsilon }\rangle =\langle \mu ^{n}\ast
\overline{\upsilon },\overline{u}\rangle =\langle \lambda _{2}\left( \mu
\right) ^{n}\overline{\upsilon },\overline{u}\rangle \rightarrow 0.
\end{equation*}%
Since the set $\left\{ u\ast \widetilde{\upsilon }:u,\upsilon \in
L^{2}\left( G\right) \right\} $ is linearly dense in $C_{0}\left( G\right) ,$
we have $\langle \mu ^{n},f\rangle \rightarrow 0$ for all $f\in C_{0}\left(
G\right) .$ Hence w$^{\ast }$-$\lim_{n\rightarrow \infty }\mu ^{n}=0.$
\end{proof}

\begin{corollary}
Let $\mu $ be a probability measure on a locally compact group $G.$ If one
of the subgroups $\left[ \func{supp}\left( \widetilde{\mu }\ast \mu \right) %
\right] $ and $\left[ \func{supp}\left( \mu \ast \widetilde{\mu }\right) %
\right] $ is not compact, then
\begin{equation*}
\mu ^{n}\ast f\rightarrow 0\text{ \ weakly for all }f\in L^{p}\left(
G\right) \text{ }\left( 1<p<\infty \right) .
\end{equation*}
\end{corollary}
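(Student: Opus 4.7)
The plan is to derive this weak convergence statement directly from Theorem 3.4 by dualizing to $C_0(G)$ via the convolution--duality identity already developed in the preliminaries. The hypothesis of Corollary 3.5 matches that of Theorem 3.4, so I may assume $w^{\ast}$-$\lim_{n\to\infty} \mu^n = 0$ in $M(G)$.

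First, I would fix $f \in L^p(G)$ and $h \in L^q(G)$ with $1/p + 1/q = 1$. Recall from the discussion preceding Corollary 2.6 that $h \ast f^{\vee} \in C_0(G)$ and
\begin{equation*}
\langle \nu \ast f, h \rangle = \langle \nu, h \ast f^{\vee}\rangle, \quad \forall \nu \in M(G).
\end{equation*}
Applying this to $\nu = \mu^n$ gives
\begin{equation*}
\langle \mu^n \ast f, h\rangle = \langle \mu^n, h \ast f^{\vee}\rangle.
\end{equation*}

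Next, since $h \ast f^{\vee} \in C_0(G)$ and $\mu^n \to 0$ in the $w^{\ast}$-topology on $M(G) = C_0(G)^{\ast}$ by Theorem 3.4, the right-hand side tends to $0$ as $n \to \infty$. Therefore $\langle \mu^n \ast f, h\rangle \to 0$ for every $h \in L^q(G)$, which is precisely the statement that $\mu^n \ast f \to 0$ weakly in $L^p(G)$.

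There is no genuine obstacle here: the whole content sits in Theorem 3.4 (complete non-unitarity of $\lambda_2(\mu)$ via Nagy--Foia\c{s}), and this corollary is simply the translation of $w^{\ast}$-convergence of $\mu^n$ into weak $L^p$-convergence of $\mu^n \ast f$, made possible by the density of $\{h \ast f^{\vee} : h \in L^q(G),\, f \in L^p(G)\}$ in $C_0(G)$ and the duality identity above. The only mild point to note is that power boundedness of $\mu$ is not assumed, but this is not needed since $\|\mu^n\|_1 \leq 1$ automatically for a probability measure, so the sequence $\{\mu^n \ast f\}$ is bounded in $L^p(G)$ and weak convergence on the dense set $L^q(G)$ suffices.
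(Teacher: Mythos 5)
Your proof is correct and is essentially identical to the paper's: both fix $f\in L^{p}(G)$ and $h\in L^{q}(G)$, note that $h\ast f^{\vee}\in C_{0}(G)$, and conclude from $\langle \mu^{n}\ast f,h\rangle=\langle \mu^{n},h\ast f^{\vee}\rangle$ together with Theorem 3.4 that the pairing tends to zero. (The appeal to density of $\{h\ast f^{\vee}\}$ in $C_{0}(G)$ in your closing remarks is not needed for this direction, but it does no harm.)
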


\begin{proof}
If $f\in L^{p}\left( G\right) $ and $h\in L^{q}\left( G\right) $ $%
(1/p+1/q=1),$ then $h\ast f^{\vee }\in C_{0}\left( G\right) $. By Theorem
3.4, w$^{\ast }$-$\lim_{n\rightarrow \infty }\mu ^{n}=0$ and therefore we
have
\begin{equation*}
\lim_{n\rightarrow \infty }\langle \mu ^{n}\ast f,h\rangle
=\lim_{n\rightarrow \infty }\langle \mu ^{n},h\ast f^{\vee }\rangle =0.
\end{equation*}
\end{proof}

However, we have the following.

\begin{proposition}
Let $\mu $ be a probability measure on a locally compact group $G$ and
assume that one of the subgroups $\left[ \func{supp}\left( \widetilde{\mu }%
\ast \mu \right) \right] $ and $\left[ \func{supp}\left( \mu \ast \widetilde{%
\mu }\right) \right] $ is not compact. If the Lebesgue measure of $\sigma
\left( \lambda _{2}\left( \mu \right) \right) \cap \mathbb{T}$ is zero, then
\begin{equation*}
\lim_{n\rightarrow \infty }\left\Vert \mu ^{n}\ast f\right\Vert _{p}=0,\text{
\ }\forall f\in L^{p}\left( G\right) \text{ }\left( 1<p<\infty \right) .
\end{equation*}
\end{proposition}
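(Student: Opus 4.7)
The strategy is two stage: first establish norm convergence in $L^{2}(G)$ by Hilbert-space operator theory applied to $T:=\lambda _{2}(\mu )$, and then propagate to general $L^{p}$ by a density-and-interpolation argument.

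\smallskip
\emph{Step 1 ($L^{2}$-convergence.)} The proof of Theorem 3.4 already shows, under the non-compactness hypothesis on $\left[ \func{supp}\left( \widetilde{\mu }\ast \mu \right) \right] $ (or on $\left[ \func{supp}\left( \mu \ast \widetilde{\mu }\right) \right] $), that $T$ is a completely non-unitary contraction on the Hilbert space $L^{2}(G)$. Combine this with the new spectral hypothesis $m(\sigma (T)\cap \mathbb{T})=0$ and invoke the Sz.-Nagy--Foia\c{s} structure theory of c.n.u.\ contractions: the residual and $\ast $-residual parts of the minimal unitary dilation of $T$ carry absolutely continuous spectral measures whose supports are contained in $\sigma (T)\cap \mathbb{T}$. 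A Lebesgue-null set supports no non-zero absolutely continuous spectral measure, so both residual parts must vanish. This places $T$ in the class $C_{00}$, and hence
\begin{equation*}
\Vert \mu ^{n}\ast f\Vert _{2}=\Vert T^{n}f\Vert _{2}\longrightarrow 0\qquad \forall f\in L^{2}(G).
\end{equation*}

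\smallskip
\emph{Step 2 (extension to $L^{p}$.)} Since $\mu $ is a probability measure, $\lambda _{r}(\mu )$ is a contraction on $L^{r}(G)$ for every $r\in \lbrack 1,\infty ]$. For $f\in C_{c}(G)$ one has $f\in L^{r}(G)$ for all such $r$, so log-convexity of $L^{r}$-norms gives, for $2\leq p<\infty $,
\begin{equation*}
\Vert \mu ^{n}\ast f\Vert _{p}\leq \Vert \mu ^{n}\ast f\Vert _{2}^{2/p}\Vert \mu ^{n}\ast f\Vert _{\infty }^{1-2/p}\leq \Vert \mu ^{n}\ast f\Vert _{2}^{2/p}\Vert f\Vert _{\infty }^{1-2/p},
\end{equation*}
and, for $1<p\leq 2$ with $\theta :=2(p-1)/p\in (0,1)$,
\begin{equation*}
\Vert \mu ^{n}\ast f\Vert _{p}\leq \Vert \mu ^{n}\ast f\Vert _{1}^{1-\theta }\Vert \mu ^{n}\ast f\Vert _{2}^{\theta }\leq \Vert f\Vert _{1}^{1-\theta }\Vert \mu ^{n}\ast f\Vert _{2}^{\theta }.
\end{equation*}
In either case Step~1 forces the right-hand side to zero, so $\Vert \mu ^{n}\ast f\Vert _{p}\to 0$ on the dense subspace $C_{c}(G)\subset L^{p}(G)$. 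A standard three-$\varepsilon $ argument, using the uniform bound $\Vert \lambda _{p}(\mu )^{n}\Vert \leq 1$, then lifts the convergence to all of $L^{p}(G)$.

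\smallskip
The substantive work is Step~1: converting a Lebesgue-null peripheral spectrum of the c.n.u.\ contraction $T=\lambda _{2}(\mu )$ into membership in $C_{00}$ via Sz.-Nagy--Foia\c{s} dilation theory. Once that is granted, Step~2 is essentially routine interpolation and density.
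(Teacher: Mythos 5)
Your proposal is correct and follows essentially the same route as the paper: establish that $\lambda _{2}\left( \mu \right) $ is a completely non-unitary contraction, apply the Sz.-Nagy--Foia\c{s} result that a c.n.u.\ contraction with Lebesgue-null peripheral spectrum lies in $C_{00}$ (the paper simply cites \cite[Chapter II, Proposition 6.7]{7} where you sketch its dilation-theoretic proof), and then pass from $L^{2}$ to $L^{p}$ by log-convexity of the $L^{r}$-norms on $C_{c}\left( G\right) $ followed by density. The only cosmetic difference is that you interpolate against the endpoints $r=1$ and $r=\infty $, whereas the paper uses an auxiliary exponent $q$; both are valid.
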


\begin{proof}
As in the proof of Theorem 3.4 we can see that if one of the subgroups $%
\left[ \func{supp}\left( \widetilde{\mu }\ast \mu \right) \right] $ and $%
\left[ \func{supp}\left( \mu \ast \widetilde{\mu }\right) \right] $ is not
compact, then $\lambda _{2}\left( \mu \right) $ is a completely non-unitary
contraction. On the other hand, Nagy-Foias theorem \cite[Chapter II,
Proposition 6.7]{7} states that if $T$ is a completely non-unitary
contraction on a Hilbert space $\mathcal{H}$ and if $\sigma \left( T\right)
\cap \mathbb{T}$ is of the Lebesgue measure zero, then $\lim_{n\rightarrow
\infty }\left\Vert T^{n}x\right\Vert =0$ for all $x\in \mathcal{H}.$
Therefore we have $\lim_{n\rightarrow \infty }\left\Vert \mu ^{n}\ast
f\right\Vert _{2}=0$ for all $f\in L^{2}\left( G\right) .$ Now, assume that $%
p\neq 2.$ Let $C_{c}\left( G\right) $ be the space of all complex valued
continuous functions on $G$ with compact support and let $f\in C_{c}\left(
G\right) .$ Then, $\nu \ast f\in L^{p}\left( G\right) $ for any $\nu \in
M\left( G\right) $ and for all $1\leq p\leq \infty .$ By the Riesz-Thorin
Convexity Theorem \cite[Chapter VI, Lemma 10.9]{6}, $\alpha \mapsto \log
\left\Vert \nu \ast f\right\Vert _{\frac{1}{\alpha }}$ is a convex function
on $\left[ 0,1\right] $. Let $q$ be chosen such that $q>p$ for $p>2$ and $%
1<q<p$ for $1<p<2.$ If $\lambda :=\frac{2q-2p}{pq-2p},$ then $0<\lambda <1$
and $\frac{1}{p}=\frac{1-\lambda }{q}+\frac{\lambda }{2}.$ Consequently, we
can write
\begin{equation*}
\left\Vert \nu \ast f\right\Vert _{p}\leq \left\Vert \nu \ast f\right\Vert
_{q}^{1-\lambda }\left\Vert \nu \ast f\right\Vert _{2}^{\lambda },\text{ \ }%
\forall \nu \in M\left( G\right) .
\end{equation*}%
Hence we have
\begin{equation*}
\left\Vert \mu ^{n}\ast f\right\Vert _{p}\leq \left\Vert \mu ^{n}\ast
f\right\Vert _{q}^{1-\lambda }\left\Vert \mu ^{n}\ast f\right\Vert
_{2}^{\lambda }\leq \left\Vert f\right\Vert _{q}^{1-\lambda }\left\Vert \mu
^{n}\ast f\right\Vert _{2}^{\lambda },\text{ \ }\forall n\in
%TCIMACRO{\U{2115} }%
%BeginExpansion
\mathbb{N}
%EndExpansion
.
\end{equation*}%
It follows that $\lim_{n\rightarrow \infty }\left\Vert \mu ^{n}\ast
f\right\Vert _{p}=0$ for all $f\in C_{c}\left( G\right) .$ Since $%
C_{c}\left( G\right) $ is dense in $L^{p}\left( G\right) ,$ we have $%
\lim_{n\rightarrow \infty }\left\Vert \mu ^{n}\ast f\right\Vert _{p}=0$ for
all $f\in L^{p}\left( G\right) .$
\end{proof}

\begin{remark}
Let $G$ be a locally compact abelian group and $\mu \in M\left( G\right) .$
The Fourier-Plancherel transform estabilishes a unitary equivalence between
the convolution operator $\lambda _{2}\left( \mu \right) $ on $L^{2}\left(
G\right) $ and the multiplication operator $M_{\widehat{\mu }}$ on $%
L^{2}\left( \widehat{G}\right) $. Therefore, we have $\sigma \left( \lambda
_{2}\left( \mu \right) \right) =\overline{\widehat{\mu }\left( \widehat{G}%
\right) }.$ If $\mathcal{E}_{\mu }:=\left\{ \chi \in \widehat{G}:\left\vert
\widehat{\mu }\left( \chi \right) \right\vert =1\right\} $, then $\mathcal{E}%
_{\mu }=\mathcal{F}_{\widetilde{\mu }\ast \mu }$. Notice also that  $%
\widehat{\mu }\left( \mathcal{E}_{\mu }\right) \subseteq \sigma \left(
\lambda _{2}\left( \mu \right) \right) \cap \mathbb{T}$. If $\mu $ is a
probability measure, then $\mathcal{E}_{\mu }$ is a closed subgroup of $%
\widehat{G},$ that is, $\mathcal{E}_{\mu }=\left[ \func{supp}\left(
\widetilde{\mu }\ast \mu \right) \right] ^{\bot }$ $($see, Remark 2.11$).$
\end{remark}

\end{document}